\newtheorem{theorem}{Theorem}[section]
\newtheorem{lemma}[theorem]{Lemma}
\newtheorem{definition}[theorem]{Definition}
\DeclareMathOperator{\du}{d\!}
\DeclareMathOperator{\bu}{{\mathbf{u}}}
\DeclareMathOperator{\bq}{{\mathbf{q}}}
\DeclareMathOperator{\blf}{\mathbf{f}}
\DeclareMathOperator{\bw}{\mathbf{w}}
\DeclareMathOperator{\bv}{\mathbf{v}}
\DeclareMathOperator{\bn}{\mathbf{n}}
\DeclareMathOperator{\bg}{\mathbf{g}}
\DeclareMathOperator{\dive}{\mathrm{div}}
\DeclareMathOperator{\ws}{ \mathrel{ \ensurestackMath{ \stackon[1pt]{\rightharpoonup}{\scriptstyle\ast}}}}
\numberwithin{equation}{section}
\begin{document}

\begin{frontmatter}



\title{On a nonlocal two-phase flow with convective heat transfer}


\author{{\v S}{\'a}rka Ne{\v c}asov{\' a}}
\ead{matus@math.cas.cz}

\author{John Sebastian H. Simon }
\ead{simon@math.cas.cz}

\affiliation[]{organization={Institute of Mathematics, Czech Academy of Sciences},
            city={Prague},
            country={Czech Republic}}

\begin{abstract}
We study a system describing the dynamics of a two-phase flow of incompressible viscous fluids influenced by the convective heat transfer of Caginalp-type. The separation of the fluids is expressed by the order parameter which is of diffuse interface and is known as the Cahn-Hilliard model.
We shall consider a nonlocal version of the Cahn-Hilliard model which replaces the gradient term in the free energy functional into a spatial convolution operator acting on the order parameter and incorporate with it a potential that is assumed to satisfy an arbitrary polynomial growth. 
The order parameter is influenced by the fluid velocity by means of convection, the temperature affects the interface via a modification of the Landau-Ginzburg free energy.
The fluid is governed by the Navier--Stokes equations which is affected by the order parameter and the temperature by virtue of the capillarity between the two fluids.
The temperature on the other hand satisfies a parabolic equation that considers latent heat due to phase transition and is influenced by the fluid via convection.
The goal of this paper is to prove the global existence of weak solutions and show that, for an appropriate choice of sequence of convolutional kernels, the solutions of the nonlocal system converges to its local version. 
\end{abstract}



\begin{keyword}
nonlocal Cahn--Hilliard equations \sep Boussinesq equations \sep well-posedness \sep nonlocal-to-local convergence

\MSC[2020] 45K05 \sep 76D03 \sep 76T06 \sep 35B40

\end{keyword}

\end{frontmatter}


\section{Introduction}

The study of multi-phase flows has been of interest among experts due to their complexities and robust properties. One of the well-known models is the so-called Cahn-Hilliard equation which models a binary immiscible fluid that assumes a diffuse interface between the two fluids \cite{cahn1958,cahn1959}. Since then several modifications have been introduced to take into account the physical and biological implications of the model, see eg. \cite{rocca2023,rocca2021} for application to tumor growth, \cite{abels2009a,abels2022} for binary fluids with unmatched densities, \cite{kalousek2021,mitra2023} for binary magnetic fluids, \cite{ohta1986} for separation of diblock copolymers, and \cite{bertozzi2007} for image reconstruction. 

Aside from modifying the original system or coupling it with other physical models to describe more physical (or biological) phenomena, the Cahn-Hilliard equations have been recasted to consider long-range particle interactions on the interface of the two fluids. This version of the Cahn-Hilliard equations, known to many as the nonlocal Cahn-Hilliard equations, was introduced by G. Giacomin and J. Lebowitz \cite{Giacomin1997,Giacomin1998}. Such model has been the subject of many expositions, among them are the following works \cite{abels2015,bates2005,burkovska2021,colli2012, DAVOLI2021a,Davoli2021b,frigeri2016}. These modifications serve as motivations for this article. In particular, we will consider an incompressible two-phase flow undergoing phase separation due to diffusion and is influenced by temperature by means of a Caginalp-type equation. The dynamics of the binary fluids described by the Cahn-Hilliard equations will make use of the aforementioned nonlocal system.

Let $\Omega\subset \mathbb{R}^d$ be a fixed domain -- with sufficiently smooth boundary $\partial\Omega$ -- which a two-phase fluid occupies, and $T>0$ be a fixed final time. 
We denote by $({\bu},p):\Omega\times(0,T) \to\mathbb{R}^d\times\mathbb{R}$ the velocity-pressure pairing of the fluid, $\theta:\Omega\times(0,T)\to\mathbb{R}$ the relative temperature around the critical temperature $\theta_c = 0$, and $\varphi:\Omega\times(0,T)\to\mathbb{R}$
the (relative) concentration of the binary fluid. 

The concentration $\varphi$ is described by the Cahn--Hilliard equation with a transport term influenced by the average velocity ${\bu}$, which takes values $\varphi\in [-1,1]$, where $\varphi = 1$ or $\varphi = -1$ corresponds to the two different phases of the fluids.
To be specific, the dynamics of $\varphi$ are governed by 
\begin{align*}
	D_t\varphi = m\Delta \mu,
\end{align*}
where $D_t = \partial_t + {\bu}\cdot\nabla$ is the material derivative due to the fluid velocity, $\mu$ is the chemical potential, and $m$ corresponds to the mobility of the interface. Such model -- if one, for the meantime, ignores the contribution of the fluid velocity -- is based on the so-called H-model which considers a diffuse interface between two interacting fluids \cite{cahn1958,gurtin1996}.

The average velocity and pressure of the fluids is governed by the Navier--Stokes equations and is influenced by the order parameter $\varphi$ and the relative temperature $\theta$ by a Korteweg-type force proportional to $(\mu-\ell_c\theta)\nabla\varphi$ where $\ell_c$ is a parameter concerning latent heat \cite[Appendix]{jasnow1996}. A Boussinesq approximation which incorporates a constant gravitational force with the linearized equation of state expressed as $\ell(\varphi,\theta){\bg} = (\alpha_0 + \alpha_1\varphi + \alpha_2\theta){\bg}$ is also assumed to affect the fluid. To be specific, the fluid is governed by the equation
\begin{align*}
	D_t{\bu} - \dive(\nu(\varphi)2\mathrm{D}{\bu}) + \nabla p = \mathcal{K}(\mu-\ell_c\theta)\nabla\varphi + \ell(\varphi,\theta){\bg} + {\bq}.
\end{align*}
Here $\mathcal{K}$ is a capillarity constant, $\nu(\varphi)>0$ denotes the fluid viscosity, ${\bq}$ denotes an external force and the operator $\mathrm{D}$ is defined as $\mathrm{D}{\bu} = (\nabla{\bu} + \nabla{\bu}^\top)/2$. Ignoring for the meantime the contribution of the temperature $\theta$, the combination of the two equations previously introduced is the well-known Navier--Stokes/Cahn--Hilliard system which has been considered in numerous expositions, see for example the papers \cite{abels2009a, kalousek2021, abels2009b,garcke2012,boyer2002,boyer2001,gal2010, giorgini2019} and the references therein.

The equation concerning the temperature is based on Caginalp's interpretation of free boundary problems derived from phase trasition \cite{caginalp1986}. By introducing the quantity $H(\theta,\varphi):= \theta - \ell_h\varphi$ we consider the dynamics governed by the transport equation
\begin{align*}
	D_tH(\theta,\varphi) -\kappa \Delta\theta = {\bg}\cdot{\bu} + z,
\end{align*}
where $\ell_h$ is a constant related to the latent heat, ${\bg}$ is the gravitational force, and $z$ is an external heat source. The Laplacian above comes from the assumption that the heat flux follows Fourier's thermal conduction law.

Before we go any further, let us talk about the chemical potential $\mu$. Here, we note that the usual Landau--Ginzburg free energy functional is written as
\begin{align*}
	\tilde{\mathcal{E}}(\varphi,\theta) := \int_\Omega \left(\frac{\xi}{2}|\nabla\varphi|^2 + \eta F(\varphi) + \ell_c\theta\varphi \right)\du x,
\end{align*}
where the chemical potential $\mu$ is obtained by the first variation of $\mathcal{E}(\cdot,\theta)$, i.e., $\mu = -\xi\Delta \varphi + \eta F'(\varphi) + \ell_c\theta$. Such functional is used in \cite{peralta2021,Peralta2022} to model a nonisothermal diffuse interface two-phase flow where the author showed the existence of solutions and its application to optimal control problems.

Recent developments, however, saw the applicability of nonlocal energy functionals, for example in tumor growth models \cite{rocca2021} and in exhibiting sharp interface without having to taking asymptotic limit of the diffusive constant \cite{burkovska2021}. In this paper, we shall modify the temperate Landau free energy functional into a nonlocal one by changing the gradient into a nonlocal spatial operator. To be specific, we shall use the non-local Landau--Ginzburg free energy functional 
\begin{align*}
	\mathcal{E}(\varphi,\theta) :=&  \frac{1}{4}\int_\Omega\int_\Omega J(x-y)(\varphi(x)-\varphi(y))^2 \du x\du y + \int_\Omega\left( \eta F(\varphi(x) ) + \ell_c\theta(x)\varphi(x) \right)\du x,
\end{align*}
where $J:\mathbb{R}^2\to\mathbb{R}$ is a sufficiently smooth function that satisfies $J(x) = J(-x)$. The first variation of $\mathcal{E}(\cdot,\theta)$ will then give us the chemical potential $\mu = a\varphi - J\ast \varphi + \eta F'(\varphi)+ \ell_c \theta $, where 
\begin{align}
	a(x):= \int_\Omega J(x-y)\du y\ \text{ and }\ (J\ast\varphi)(x) = \int_\Omega J(x-y)\varphi(y)\du y.\label{defn:aundconv}
\end{align}
Such energy functional has been proven to take into account microscopic interactions that describe phase segregation \cite{Giacomin1997,Giacomin1998}. We also mention \cite{DAVOLI2021a,Davoli2021b} where the authors proved the convergence of the nonlocal system to the local one which also serve as a justification of the contention that $\tilde{\mathcal{E}}$ is the macroscopic limit of the energy functional $\mathcal{E}$.

Combining all these, we get the non-local Cahn-Hilliard-Boussinesq equations
\begin{subequations} 
\begin{align}
  \partial_t\varphi + {\bu}\cdot\nabla\varphi = m\Delta \mu,\quad  \mu = a\varphi - J\ast \varphi + \eta F'(\varphi)+ \ell_c \theta, \label{orderundpotential}
\end{align}   \vspace{-.2in}
\begin{align}
    \begin{aligned}
  \partial_t{\bu} + ({\bu}\cdot\nabla){\bu} {-}  \dive(\nu(\varphi)2\mathrm{D}{\bu}) + \nabla p = \mathcal{K}(\mu-\ell_c\theta)\nabla\varphi + \ell(\varphi,\theta){\bg} + {\bq},
  \end{aligned}
\end{align} \vspace{-.2in}
\begin{align}
 \partial_t\theta - \ell_h\partial_t\varphi + {\bu}\cdot\nabla(\theta - \ell_h\varphi) -\kappa \Delta\theta = {\bg}\cdot{\bu} + z,
\end{align}
\label{system:nlCHOB}
\end{subequations}
\!all of which are satisfied in the space-time domain $Q:= \Omega\times I$, where $I=(0,T)$, with the divergence-free assumption $\dive{\bu} = 0$ in $Q$, the boundary boundary conditions $\frac{\partial\mu}{\partial{\bn}} = 0$,  $ {\bu} = 0$, and $ \frac{\partial\theta}{\partial{\bn}}  = 0$ on $\Gamma:= \partial\Omega\times I$, and initial conditions $\varphi(0) = \varphi_0$, ${\bu}(0) = {\bu}_0$, and $\theta(0) = \theta_0$ in $\Omega$. 

The purpose of this article is to show the existence of weak solutions through spectral Galerkin method and to provide proof that, under appropriate conditions and choice of the convolutional kernel, solutions of the nonlocal system converge to its local version. We shall apply the techniques used in \cite{melchionna2019} for the Cahn-Hilliard system with periodic boundary conditions and sigular free energy density, by \cite{DAVOLI2021a} for the Cahn-Hilliard equations with singular potentials and $W^{1,1}$ kernels, and \cite{abels2022} for a Cahn--Hilliard--Navier--Stokes equations with unmatched densities.

 {\it The novelty in this article are as follows:} 

(i) compared to the works of P. Colli et al. \cite{colli2012} and S. Frigeri et al. \cite{frigeri2016} we considered a nonisothermal system, where the influence of the temperature is based on G. Caginalp \cite{caginalp1986}; 
(ii) aside from modifying the local free energy functional into the nonlocal version in \cite{peralta2021,Peralta2022}, we considered a non-constant viscosity dependent on the order parameter, a generalized potential of polynomial growth, and considered not only the two-dimensional case but also delved into the system in three dimensions.

The subsequent parts of this paper is as follows: the next section is devoted to the mathematical setting such as the functional spaces and the necessary embeddings. Section \ref{section:existence} deals with the existence of weak solutions for \eqref{system:nlCHOB} where we use the spectral Galerkin method as a discretization which gets to converge to the original continuous system. The final section establishes convergence of the nonlocal system to its local version by using the framework introduced in \cite{bourgain2001,bourgain2002} and the $\Gamma$-convergence in \cite{ponce2003,ponce2004}.

\section{Preliminaries}

Let us introduce the function spaces upon which the analysis will anchor on.
Given a Banach space $X$ with its dual $X^*$, the duality pairing is simply denoted as $\langle x^*,x\rangle_X$ for $x^*\in X^*$ and $x\in X$.
We denote by $L^p(\Omega)$ the space of Lebesgue integrable function of order $p\ge 1$ and the usual Sobolev-Slobodeckij spaces are denoted as $W^{s,p}(\Omega)$ with $H^s(\Omega) = W^{s,2}(\Omega)$. 
For simplicity, we use $\|\cdot\|$ and $(\cdot,\cdot)_\Omega$ to denote the norm and inner product in $L^2(\Omega)$ or $L^2(\Omega)^d$, for $d=2,3$, $\|\cdot\|_{L^p}$ for the norm in $L^p(\Omega)$ or $L^p(\Omega)^d$, and $\|\cdot\|_{W^{s,p}}$ the norm in $W^{s,p}(\Omega)$ or $W^{s,p}(\Omega)^d$.
To take into account the incompressibility of the fluid we utilize the following solenoidal spaces $V_\sigma = \{{\bu}\in H^1_0(\Omega)^d: \dive{\bu} = 0 \text{ in } \Omega \}$, and $H_\sigma = \{{\bu}\in L^2(\Omega)^d : \dive{\bu} = 0\text{ in }\Omega,\, {\bu}\cdot{\bn} = 0\text{ on }\partial\Omega \}$.

Let us define $V_s:= D(B^{s/2})$, where $B = -\Delta$ with homogeneous Neumann boundary conditions.
We hence have $$V_2 = D(B) = \left\{ \varphi\in H^2(\Omega) : \frac{\partial\varphi}{\partial{\bn}} =0 \text{ on }\partial \Omega \right\}.$$ 
We also use the notation  $H:= V_0 = L^2(\Omega)$ and $V:= V_1 = H^1(\Omega)$.
We note that $B:D(B)\subset H\to H$ is an unbounded linear operator in $H$, and $B^{-1}:H\to H$ is a self-adjoint compact operator on $H$, which is also maximal monotone. We obtain a sequence of eigenvalues $\nu_j$ with $0< \nu_1 \le \nu_2 \le \ldots \le \nu_j \to \infty$, with the associated eigenfunctions $\psi_j\in D(B)$ such that $B\psi_j = \nu_j \psi_j$ for all $j$. The set $\{\psi_j\}$ is an orthonormal eigenbasis for $H$ while orthogonal in $V$ and $D(B)$.

Let us introduce the Stokes operator $A:D(A)\subset H_\sigma\to H_\sigma$ defined as $A = -P\Delta$, where $P:L^2(\Omega)^d\to H_\sigma$ is the Leray projector.  We note that $D(A) = H^2(\Omega)^d\cap V_\sigma$ whenever $\Omega$ is of either of class $\mathcal{C}^2$ or a convex polygonal domain.
A sequence of eigenvalues $\lambda_j$ with $0\le \lambda_1 \le \lambda_2 \le \ldots \le \lambda_j \to \infty$ can be obtained with the associated eigenfunctions $v_j\in D(A)$ so that $Av_j = \lambda_jv_j$. The set $\{v_j\}$ is an orthonormal eigenbasis for $H_\sigma$ which is orthogonal in $D(A)$.

Let ${\bu}\in V_\sigma$ and $\varphi\in H$, the operator $\mathcal{A}:V_\sigma\times H\to V_\sigma^*$ is defined as $$\langle \mathcal{A}({\bu},\varphi),{\bv} \rangle_{V_\sigma} = 2(\nu(\varphi)\mathrm{D}{\bu},\mathrm{D}{\bv})_{\Omega}.$$ 
The trilinear form $b: [H^1(\Omega)^d]^3 \to \mathbb{R}  $ coming out of the weak formulation of the Navier--Stokes equations is defined as $b({\bu};{\bv},{\bw}) = (({\bu}\cdot\nabla){\bv},{\bw})_{\Omega}$. Such trilinear form is continuous and known to satisfy $b({\bu};{\bv},{\bw}) = - b({\bu};{\bw},{\bv}) $ for all ${\bu},{\bv},{\bw}\in V_\sigma$.
From $b$, we also define $\mathcal{B}:V_\sigma\times V_\sigma \to V_\sigma^*$ as $\langle \mathcal{B}({\bu},{\bv}),{\bw} \rangle_{V_\sigma} = b({\bu};{\bv},{\bw})$ for ${\bu},{\bv},{\bw}\in V_\sigma$. 
From which we shall use the notation $\mathcal{B}({\bu},{\bu}) := \mathcal{B}({\bu})$ for simplicity.

To take into account the transport terms involving the scalar functions, we introduce the operators $\mathcal{C}_1:V_\sigma\times V\to V^*$ and $\mathcal{C}_2:V\times V\to V_\sigma^*$ respectively defined as 
\begin{align*}
    &\langle \mathcal{C}_1({\bv},\varphi),\psi \rangle_V = ({\bv}\cdot\nabla \varphi,\psi)_\Omega,\\
    &\langle\mathcal{C}_2(\varphi,\psi),{\bv} \rangle_{V_\sigma} = ({\bv}\cdot\nabla \varphi,\psi)_\Omega = (\psi\nabla\varphi, {\bv})_\Omega.
\end{align*}

The following inequalities are well-known but we reiterate them here for completeness.
\begin{lemma}
    Let $\Omega\subset\mathbb{R}^n$ be an open and bounded domain that is at least of Lipschitzian class. 
    \begin{enumerate}
        \item[I.] Rellich--Kondrachov: If $1\le mp < n$ by setting $p^* = np/(n-mp)$ then the embedding $W^{m,p}(\Omega)\hookrightarrow L^{p^*}(\Omega)$ is continuous, while the embedding $W^{m,p}(\Omega)\hookrightarrow L^{q}(\Omega)$ is compact whenever $1\le q < p^*$. If $mp\ge n$ then we have the compact embedding $W^{m,p}(\Omega)\hookrightarrow L^{q}(\Omega)$ whenever $1\le q < \infty$.
        
        \item[II.] Gagliardo--Nirenberg: Let $1\le p_1,p_2,p_3\le +\infty$, $j,m\in \mathbb{N}$ with $j<m$  and $\theta\in [0,1]$ satisfy the relation
\begin{align*}
    \frac{1}{p_1} = \theta\left( \frac{1}{p_3} - \frac{m}{n} \right) + \frac{1-\theta}{p_2},\quad \frac{j}{m}\le \theta\le 1.
\end{align*}
Then if $u\in L^{p_2}(\Omega)\cap W^{m,p_3}(\Omega)$ we get, for some $c>0$ independent of $u$,
\begin{align*}
    \|u \|_{L^{p_1}} \le c\|u\|_{W^{m,p_3}}^\theta\|u\|_{L^{p_2}}^{1-\theta}.
\end{align*}

        \item[III.] Interpolation in $L^p$: Let $u\in L^{p_1}(\Omega)$, $p_2<p_1$ and $p_\theta = p_1p_2/[(1-\theta)p_1 + \theta p_2]$. There exists $c>0$ independent of $u$ such that
\begin{align*}
    \|u\|_{L^{p_\theta}}\le c\|u \|_{L^{p_1}}^\theta\|u\|_{L^{p_2}}^{1-\theta}.
\end{align*}
    \end{enumerate}
\end{lemma}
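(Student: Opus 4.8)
All three assertions are classical, so the plan is to reduce each one to a standard reference rather than reprove it from scratch; I sketch the reductions. The quickest is Part III: I would write $|u|^{p_\theta}=|u|^{\theta p_\theta}\,|u|^{(1-\theta)p_\theta}$ and apply H\"older's inequality with the conjugate exponents $p_1/(\theta p_\theta)$ and $p_2/((1-\theta)p_\theta)$, which are indeed conjugate precisely because the stated formula for $p_\theta$ is equivalent to $1/p_\theta=\theta/p_1+(1-\theta)/p_2$. This yields $\int_\Omega|u|^{p_\theta}\le\|u\|_{L^{p_1}}^{\theta p_\theta}\|u\|_{L^{p_2}}^{(1-\theta)p_\theta}$, and taking the $p_\theta$-th root gives the claim with $c=1$.

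For Part II, I would first invoke the Gagliardo--Nirenberg inequality on $\mathbb{R}^n$ (the original papers of Gagliardo and Nirenberg, or the survey of Brezis--Mironescu), where the exponent relation is exactly what dimensional scaling forces and the proof combines the whole-space Sobolev inequality with $L^p$-interpolation. To transfer it to the bounded Lipschitz domain $\Omega$, I would apply a Stein-type extension operator $E\colon W^{m,p_3}(\Omega)\to W^{m,p_3}(\mathbb{R}^n)$ which is simultaneously bounded from $L^{p_2}(\Omega)$ into $L^{p_2}(\mathbb{R}^n)$, apply the whole-space inequality to $Eu$, and restrict back to $\Omega$.

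For Part I, the continuous Sobolev embeddings ($W^{m,p}(\Omega)\hookrightarrow L^{p^*}(\Omega)$ when $mp<n$, and $W^{m,p}(\Omega)\hookrightarrow L^q(\Omega)$ for every finite $q$ when $mp\ge n$) again follow by extension to $\mathbb{R}^n$ and the corresponding whole-space inequalities. For the compactness of $W^{m,p}(\Omega)\hookrightarrow L^q(\Omega)$ with $q<p^*$ (resp.\ $q<\infty$), I would take a bounded sequence in $W^{m,p}(\Omega)$, extend it, observe via the Fr\'echet--Kolmogorov precompactness criterion that it is precompact in $L^p$, and then upgrade precompactness in $L^p$ to precompactness in every intermediate $L^q$ using the $L^p$-interpolation of Part III together with the uniform bound in $L^{p^*}$. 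For all of this I would cite Adams and Fournier, \emph{Sobolev Spaces}, and Brezis, \emph{Functional Analysis, Sobolev Spaces and PDEs}.

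The only genuine subtlety --- the ``main obstacle,'' such as it is --- is that every statement is phrased on a bounded Lipschitz domain, so each reduction to the well-understood whole-space inequalities hinges on the existence of a universal (Stein-type) extension operator for Lipschitz domains; once that is invoked, nothing further is required, which is why the lemma is stated here merely for completeness.
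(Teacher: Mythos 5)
Your reductions are all correct and standard: the H\"older computation for Part III, the Stein-extension-plus-whole-space argument for Part II, and the extension/Fr\'echet--Kolmogorov/interpolation argument for Part I are exactly the classical proofs one finds in Adams--Fournier or Brezis. The paper itself offers no proof of this lemma --- it is stated explicitly ``for completeness'' as a collection of well-known facts to be cited from the literature --- so your approach of reducing to standard references is entirely in keeping with the paper's intent, and nothing further is needed.
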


By virtue of Gagliardo--Nirenberg inequality the following properties hold true for any ${\bu},{\bv},{\bw}\in V_\sigma$:
\begin{align}
	&|b({\bu};{\bv},{\bw})| \le c\|{\bu}\|_{H_{\sigma}}^{1/2} \|{\bu}\|_{V_{\sigma}}^{1/2} \|{\bv}\|_{V_{\sigma}} \|{\bw}\|_{V_{\sigma}}, &&\text{for }d=3\label{trilinholder3d}\\
	&|b({\bu};{\bv},{\bw})| \le c\|{\bu}\|_{H_{\sigma}}^{1/2} \|{\bu}\|_{V_{\sigma}}^{1/2} \|{\bv}\|_{V_{\sigma}} |{\bw}\|_{H_{\sigma}}^{1/2} \|{\bw}\|_{V_{\sigma}}^{1/2}, &&\text{for }d=2.\label{trilinholder2d}
\end{align}

 Functions which are defined on the interval mapped to a real Banach space $X$ shall also be utilized. We denote by $C(I;X)$ the functions which are continuous from $I$ to $X$ upon which we use the norm $\|u\|_{C(I;X)} := \displaystyle\sup_{t\in\overline{I}}\|u(t)\|_X$. The Bochner spaces $L^p(I;X)$ with the norm 
\begin{align*}
\|u\|_{L^p(I;X)} = \left\{
\begin{aligned}
     &\left( \int_I \|u(t)\|_X\du t \right)^{1/p} &&\text{if }p<\infty,\\
     &\mathrm{ess}\sup_{t\in I}\|u(t)\|_X &&\text{if }p=\infty,
\end{aligned}\right.
\end{align*}
shall also be utilized, as well as the space $W^{m,p}(I;X):= \{u\in L^p(I;X): \partial^{j}_t u\in L^p(I;X), 0\le j\le m \}$.

We recall Aubin--Lions--Simon embedding. Let $X,Y,Z$ be Banach spaces such that the embeddings $X\hookrightarrow Y$ and $Y\hookrightarrow Z$ are compact and continuous, respectively. Given $1\le p,q\le \infty$, the space $W^{p,q}(X,Z):= \{ u\in L^p(I;X): \partial_t u\in L^q(I;Z)  \}$ is compactly embedded to $L^p(I;Y)$ if $p<+\infty$. On the other hand, if $p=+\infty$ and $q > 1$ the embedding $W^{p,q}(X,Z) \hookrightarrow C(\overline{I};Y)$ is compact.

\section{Existence of weak solution}\label{section:existence}

In this section we provide the existence of weak solutions to system \eqref{system:nlCHOB}.
By weak solution we use the following notion:

\begin{definition}
	Suppose that $\varphi_0\in H$ with $F(\varphi_0)\in L^1(\Omega)$, ${\bu}_0\in H_\sigma$, and $\theta_0 \in H$. We say that $[\varphi,{\bu},\theta]$ is a weak solution to \eqref{system:nlCHOB} (with the boundary condition incorporated) on $[0,T]$ corresponding to the initial conditions if 
	\begin{itemize}
		\item[$\bullet$] $[\varphi,{\bu},\theta]$ including $\mu$ satisfies
			\begin{align*}
				&\left\{
				\begin{aligned}
					&\varphi\in L^\infty(I;H)\cap L^2(I;V),\\
					&\mu \in L^2(I;V),\\
					&\partial_t\varphi\in L^{4/d}(I;V^*),
				\end{aligned}\right.\\
				&\left\{
				\begin{aligned}
					&{\bu}\in L^\infty(I;H_\sigma)\cap L^2(I;V_\sigma),\\
					&\partial_t{\bu}\in L^{4/d}(I;V_\sigma^*),
				\end{aligned}\right.\\
				&\left\{
				\begin{aligned}
					&\theta\in L^\infty(I;H)\cap L^2(I;V),\\
					&\partial_t\theta\in L^{4/d}(I;V^*),
				\end{aligned}\right.
			\end{align*}
		\item[$\bullet$] letting $\rho(x,\varphi):= a(x)\varphi + F'(\varphi)$ the following system of variational problems hold:
			\begin{subequations}
			    \begin{align}
                    \begin{aligned}
					&\langle \partial_t\varphi(t),\psi \rangle_V +  ({\bu}(t)\cdot\nabla\varphi(t),\psi)_\Omega + (m(\varphi)\nabla\rho(t),\nabla\psi)_\Omega  \\ & = (m(\varphi)\nabla(J\ast\varphi(t)),\nabla\psi)_\Omega - \ell_c(m(\varphi)\nabla\theta(t),\nabla\psi)_\Omega,
                    \end{aligned}\label{weak:phi}
				\end{align}
				\begin{align}
                    \begin{aligned}
					&\langle \partial_t{\bu}(t),{\bv} \rangle_{V_\sigma} + (({\bu}(t)\cdot\nabla){\bu}(t),{\bv} )_\Omega + 2(\nu(\varphi)\mathrm{D}{\bu}(t),\mathrm{D}{\bv}) \\& = \mathcal{K}(\bv\cdot\nabla\varphi(t), \mu(t) - \ell_c\theta(t) )_\Omega + (\ell(\varphi(t),\theta(t)){\bg},{\bv} )_\Omega + \langle {\bq},{\bv} \rangle_{V_\sigma}
                    \end{aligned}\label{weak:u}
				\end{align}
				\begin{align}
                    \begin{aligned}
					& \langle \partial_t\theta(t),\vartheta  \rangle_V -  \ell_h\langle \partial_t\varphi(t),\vartheta \rangle_V +  + \kappa(\nabla\theta(t),\nabla\vartheta)\\ & = ({\bu}(t)\cdot\nabla(\ell_h\varphi(t)-\theta(t)),\vartheta)_\Omega + ({\bg}\cdot{\bu}(t),\vartheta)_\Omega + \langle z,\vartheta  \rangle_V
                    \end{aligned}\label{weak:theta}
				\end{align} 			
                \end{subequations}
		for all $\psi\in V$, ${\bv}\in V_\sigma$, $\vartheta\in V$ and almost every $t\in (0,T)$
		\item[$\bullet$] the initial conditions hold in weak sense, i.e.
		\begin{align}
			&({\varphi}(t),{\psi})_{\Omega} \to ({\varphi}_0,{\psi})_{\Omega} \text{ as }t\to 0\quad \forall {\psi}\in V,\label{initcon:phi}\\
			&({\bu}(t),{\bv})_{\Omega} \to ({\bu}_0,{\bv})_{\Omega} \text{ as }t\to 0\quad \forall {\bv}\in V_\sigma,\label{initcon:u}\\
			&({\theta}(t),{\vartheta})_{\Omega} \to ({\theta}_0,{\vartheta})_{\Omega} \text{ as }t\to 0\quad \forall {\vartheta}\in V.\label{initcon:theta}
		\end{align}
	\end{itemize} 
\label{definition:weak}
\end{definition}

Note that $({\bu}(t)\cdot\nabla\varphi(t),\psi)_\Omega = -({\bu}(t)\cdot\nabla\psi,\varphi(t))_\Omega$. This implies that by taking $\psi = 1$, we get $\langle \partial_t\varphi(t),1 \rangle_V =0$. Whence, we get a total mass conservation for the order parameter.

To be able to provide such existence, we have to impose the following assumptions on the kernel $J$, the viscosity $\nu$, the potential $F$, and the external forces  ${\bq}$ and $z$:
\begin{itemize}
	\item[(A1)] $J\in W^{1,1}(\mathbb{R}^d)$, \quad $J(x) = J(-x)$, and $a(x)\ge 0$ a.e. $x\in\Omega$.
	\item[(A2)] We assume that the mobility is constant upon which we assume $m(\varphi) = 1$, while the viscosity function $\nu$ is locally Lipschitz on $\mathbb{R}$ and that there exists $\underline{\nu},\overline{\nu}>0$ such that $\underline{\nu}\le \nu(s) \le \overline{\nu}$ for all $s\in\mathbb{R}$.
	\item[(A3)] $F\in C^{2,1}_{loc}(\mathbb{R})$ and that there exists $c_0>0$ such that $F''(s) + a(x) \ge c_0$, for all $s\in\mathbb{R}$ and a.e. $x\in \Omega$.
	\item[(A4)] There exists $c_1>\frac{1}{2}\|J\|_{L^1(\mathbb{R}^d)}$ and $c_2\in\mathbb{R}$ such that $F(s) \ge c_1s^2 - c_2,$ for all $s\in\mathbb{R}$.
	\item[(A5)] There exists $c_3>0$, $c_4\ge 0$, and $p\in (1,2]$ such that $|F'(s)|^p \le c_3|F(s)| + c_4$ for all $s\in\mathbb{R}$.
	\item[(A6)] $\bq\in L^2(I;V_\sigma^*)$, and $z\in L^2(I;V^*)$.
\end{itemize}

We mention that (A1) is a standard assumption for the nonlocal Cahn-Hilliard equations which properly serves our purpose of establishing existence of weak solutions and is satisfied by the sequence of kernels for establising nonlocal-to-local convergence, nevertheless we mention \cite{bates2005} for a slightly stricter assumption and \cite[Remark 1]{frigeri2016} for a weaker assumption. 

Note that (A3) implies that $F$ can be written as $F(s) = G(s) - \frac{a^*}{2}s^2$, where $a^* = \|a\|_{L^\infty(\Omega)}$ and $G\in C^{2,1}(\mathbb{R})$ is a strictly convex function. While (A5) accounts for an arbitrary polynomial growth for $F$. 

We also mention Korn's inequality which will be useful in the subsequent analyses
\begin{lemma}
There exists a constant~$c_{\rm K} > 0$ such that the inequalities, $\|D(\bv)\| \le \|\bv\|_{V_\sigma} \le c_{\rm K} \|D(\bv)\|$, hold for any $\bv \in V$.
	\label{lemma:korn}
\end{lemma}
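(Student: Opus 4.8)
The plan is to treat the two inequalities separately: the left one is elementary, while the right one is the classical first Korn inequality, which on $V_\sigma$ follows from a single integration-by-parts identity. Throughout, $\bv$ is understood to range over $V_\sigma$ (the homogeneous Dirichlet condition is essential, since the right-hand side of the claim involves only $\|\mathrm{D}\bv\|$), and I will use that, by the Poincar\'e inequality, $\|\bv\|_{V_\sigma}$ is equivalent to $\|\nabla\bv\|$.

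For the left inequality I would argue pointwise. Writing $\mathrm{D}\bv = \tfrac12(\nabla\bv + \nabla\bv^\top)$ and using the Frobenius norm, one expands
\[
|\mathrm{D}\bv|^2 = \tfrac12|\nabla\bv|^2 + \tfrac12\,\nabla\bv:\nabla\bv^\top \le |\nabla\bv|^2,
\]
since $\nabla\bv:\nabla\bv^\top = \sum_{i,j}\partial_i v_j\,\partial_j v_i \le \sum_{i,j}\tfrac12\big((\partial_i v_j)^2 + (\partial_j v_i)^2\big) = |\nabla\bv|^2$ by Young's inequality. Integrating over $\Omega$ gives $\|\mathrm{D}\bv\| \le \|\nabla\bv\| \le \|\bv\|_{V_\sigma}$.

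For the right inequality, first take $\bv \in C_c^\infty(\Omega)^d$. Two integrations by parts, with no boundary contribution thanks to the compact support, yield
\[
\|\mathrm{D}\bv\|^2 = \tfrac12\|\nabla\bv\|^2 + \tfrac12\int_\Omega \partial_i v_j\,\partial_j v_i\,\du x = \tfrac12\|\nabla\bv\|^2 + \tfrac12\|\dive\bv\|^2 \ge \tfrac12\|\nabla\bv\|^2 ,
\]
so $\|\nabla\bv\| \le \sqrt{2}\,\|\mathrm{D}\bv\|$. Since $C_c^\infty(\Omega)^d$ is dense in $H^1_0(\Omega)^d \supset V_\sigma$ and both sides are continuous for the $H^1$ norm, this extends to every $\bv \in V_\sigma$; in fact, for divergence-free $\bv$ the middle term vanishes and one has the equality $\|\mathrm{D}\bv\|^2 = \tfrac12\|\nabla\bv\|^2$. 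Combining with the Poincar\'e inequality $\|\bv\|_{V_\sigma} \le c_P\|\nabla\bv\|$ gives the claim with $c_{\rm K} = \sqrt{2}\,c_P$.

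I do not expect a genuine obstacle here: the statement is standard, and the only point requiring a little care is the density argument used to pass from smooth compactly supported fields to general elements of $V_\sigma$, together with the observation that the boundary terms in the integration by parts vanish precisely because of the homogeneous Dirichlet condition encoded in $V_\sigma$.
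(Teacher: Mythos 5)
Your proof is correct and complete. The paper itself states this lemma without proof, treating it as a standard fact, so there is nothing to compare against; your argument supplies exactly the standard justification. Both halves check out: the pointwise bound $|\mathrm{D}\bv|^2\le|\nabla\bv|^2$ gives the left inequality, and the integration-by-parts identity $\|\mathrm{D}\bv\|^2=\tfrac12\|\nabla\bv\|^2+\tfrac12\|\dive\bv\|^2$ on $H^1_0(\Omega)^d$ (established on $C_c^\infty(\Omega)^d$ and extended by density, with both sides $H^1$-continuous) gives the right one, with the pleasant bonus that for $\bv\in V_\sigma$ the divergence-free constraint turns it into the equality $\|\mathrm{D}\bv\|^2=\tfrac12\|\nabla\bv\|^2$, so $c_{\rm K}=\sqrt2\,c_P$ is explicit. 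Your reading of the statement's ``$\bv\in V$'' as a typo for $\bv\in V_\sigma$ is the right one --- the first Korn inequality in this form genuinely requires the homogeneous Dirichlet condition, as rigid motions show --- and your remark that one does not need density of \emph{solenoidal} smooth fields (only the identity on all of $H^1_0(\Omega)^d$, then restriction to $\dive\bv=0$) is a clean way to sidestep a subtlety that a more naive density argument would stumble on.
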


Lastly, let us introduce the total energy for the system which is given by:
\begin{align}\hspace{-.2in}
\begin{aligned}
2\mathbb{E}(\varphi,{\bu},\theta):=&\, \frac{1}{\ell_c}\left\{\frac{1}{2}\int_\Omega\int_\Omega J(x-y)(\varphi(x) - \varphi(y))^2 \du y\du x + 2\int_\Omega F(\varphi(x))\du x \right\}\\ & +
 \frac{1}{\mathcal{K}\ell_c}\int_\Omega |{\bu}(x)|^2 \du x + \frac{1}{\ell_h}\int_\Omega |\theta(x)|^2\du x,
\end{aligned}\label{totalenergynl}
\end{align}
and the dissipation functional denoted and written as:
\begin{align}
    \mathbb{D}(\varphi,\mu,{\bu},\theta): = \frac{1}{\ell_c}\|\nabla\mu \|^2 + \frac{2}{\mathcal{K}\ell_c}\|\sqrt{\nu(\varphi)}\mathrm{D}{\bu} \|^2 + \frac{\kappa}{\ell_h}\|\nabla\theta \|^2
\end{align}
We also use the following notation for the nonlocal energy
\begin{align*}
    \mathbb{E}_{nl}(\varphi) := \frac{1}{2}\int_\Omega\int_\Omega J(x-y)(\varphi(x) - \varphi(y))^2 \du y\du x.
\end{align*}

The proof of existence of weak solutions that we shall provide will be based to that of \cite[Theorem 1]{colli2012}. We provide a comprehensive proof for completeness as well as to simplify the computations we go through for the asymptotic analysis in the next section. We also point out that instead of beginning with a higher regularity for the initial data of the order parameter, as done in the aforementioned reference, we go directly into using Yosida approximation for the initial data. The promised existence of weak solution is summarized and proven below. 
\begin{theorem}
	Suppose that the assumptions on the initial data is as in Definition \ref{definition:weak}. Additionally, assume that (A1)-(A6) hold. Then, for any $T>0$ there exists a weak solution $[\varphi,{\bu},\theta]$ to the system \eqref{system:nlCHOB} on $[0,T]$. Furthermore, the following energy inequality holds for almost every $t>0$
	\begin{align}
	\begin{aligned}
		&\mathbb{E}(\varphi(t),{\bu}(t),\theta(t)) + \int_0^t \mathbb{D}(\varphi(s),\mu(s),{\bu}(s),\theta(s)) \du s\\ 
        &\le \mathbb{E}(\varphi_0,{\bu}_0,\theta_0)
		+ \int_0^t \big\{ \langle{\bq}(s),{\bu}(s)\rangle_{V_\sigma} + (\ell(\varphi(s),\theta(s)){\bg},{\bu}(s))_\Omega \\
        &\ \ + \langle{z}(s),{\theta}(s)\rangle_{V} + ({\bg}\cdot{\bu}(s),\theta(s))_\Omega \big\} \du s  .
	\end{aligned}\label{energyineq}
	\end{align}
 \label{th:existence}
\end{theorem}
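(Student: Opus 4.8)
The plan is to construct approximate solutions via a spectral Galerkin scheme, derive uniform a priori estimates from the energy structure, and pass to the limit using compactness. First I would set up the discretization: for each $n$, seek $\varphi_n \in \mathrm{span}\{\psi_1,\dots,\psi_n\}$, $\bu_n \in \mathrm{span}\{v_1,\dots,v_n\}$, $\theta_n \in \mathrm{span}\{\psi_1,\dots,\psi_n\}$ solving the projected versions of \eqref{weak:phi}--\eqref{weak:theta}, with initial data the Galerkin projections of $\varphi_0,\bu_0,\theta_0$ (using, as the authors indicate, a Yosida approximation for $\varphi_0$ so that $F(\varphi_{0})$ is controlled). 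Since $m(\varphi)=1$ by (A2) and $F \in C^{2,1}_{loc}$, the nonlinearities are locally Lipschitz, so the resulting ODE system has a local-in-time solution by Cauchy--Lipschitz; global existence on $[0,T]$ follows once the a priori bounds below are in place.

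The core step is the energy estimate. I would test \eqref{weak:phi} with $\mu_n = a\varphi_n - J\ast\varphi_n + \eta F'(\varphi_n) + \ell_c\theta_n$ (equivalently differentiate $\mathbb{E}_{nl}(\varphi_n) + 2\int_\Omega F(\varphi_n)$ in time, using symmetry $J(x)=J(-x)$ to handle the convolution term), test \eqref{weak:u} with $\bu_n$, and test \eqref{weak:theta} with $\theta_n$. The key cancellations are: the convective term $b(\bu_n;\bu_n,\bu_n)=0$ by skew-symmetry; the Korteweg coupling $\mathcal{K}(\bv\cdot\nabla\varphi_n,\mu_n-\ell_c\theta_n)$ in the momentum equation cancels against the transport term $(\bu_n\cdot\nabla\varphi_n,\mu_n)$ from the $\varphi$-equation once weighted by $1/(\mathcal{K}\ell_c)$; and the $\ell_h$-terms coupling $\varphi$ and $\theta$ cancel against $({\bu}_n\cdot\nabla(\ell_h\varphi_n),\theta_n)$ after the appropriate weighting by $1/\ell_h$. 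This yields exactly
\[
\frac{d}{dt}\mathbb{E}(\varphi_n,\bu_n,\theta_n) + \mathbb{D}(\varphi_n,\mu_n,\bu_n,\theta_n) = \langle{\bq},{\bu}_n\rangle_{V_\sigma} + (\ell(\varphi_n,\theta_n){\bg},{\bu}_n)_\Omega + \langle z,\theta_n\rangle_V + ({\bg}\cdot{\bu}_n,\theta_n)_\Omega.
\]
Using (A4) to see $\mathbb{E}$ is coercive (it controls $\|\varphi_n\|^2$, $\|\bu_n\|^2$, $\|\theta_n\|^2$, and the nonnegative nonlocal term, since $c_1 > \tfrac12\|J\|_{L^1}$ beats the sign-indefinite part $-\tfrac12\int\int J(\varphi_n(x)-\varphi_n(y))^2$ is actually nonnegative anyway once $a\ge 0$; more precisely one writes $\mathbb{E}_{nl} + 2\int F = \int(a+2F)$-type bounds), Korn's inequality (Lemma \ref{lemma:korn}) for the $\|\mathrm{D}\bu_n\|^2$ term, Poincaré on $\nabla\theta_n$ combined with mass conservation of $\varphi_n$ to bound the full $H^1$ norm of $\theta_n$, and Young's inequality plus Grönwall to absorb the linear-in-$\bu_n,\theta_n$ forcing (here (A6) gives $\bq,z\in L^2$), I obtain uniform bounds $\varphi_n,\theta_n \in L^\infty(I;H)\cap L^2(I;V)$, $\bu_n\in L^\infty(I;H_\sigma)\cap L^2(I;V_\sigma)$, and $\mu_n\in L^2(I;V)$ (the last requires also estimating $\|\mu_n\|$ itself, via $\int_\Omega \rho(\varphi_n)$ controlled by the chemical-potential equation tested with a constant, a standard trick for nonlocal Cahn--Hilliard). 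From the equations and (A5) — which gives $F'(\varphi_n)\in L^{p'}$-type control with $4/d$-integrability in time — I get $\partial_t\varphi_n,\partial_t\bu_n,\partial_t\theta_n \in L^{4/d}(I;(\cdot)^*)$.

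Then I would extract weakly/weak-$*$ convergent subsequences in these spaces and apply Aubin--Lions--Simon (stated in the Preliminaries) to get strong convergence $\varphi_n\to\varphi$, $\theta_n\to\theta$ in $L^2(I;H)$ and $\bu_n\to\bu$ in $L^2(I;H_\sigma)$, hence a.e. convergence along a further subsequence. The a.e. convergence of $\varphi_n$ together with continuity of $F',\nu,\ell$ handles the nonlinear terms: $F'(\varphi_n)\to F'(\varphi)$ and $\nu(\varphi_n)\to\nu(\varphi)$ a.e., upgraded to the needed weak/strong convergence via the uniform bounds and Vitali/dominated convergence (using (A5) for equi-integrability of $F'(\varphi_n)$). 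The convolution term $J\ast\varphi_n\to J\ast\varphi$ is easy since $J\in W^{1,1}$ by (A1). Passing to the limit in the trilinear and transport terms uses the strong $L^2(I;H)$ convergence against the weak $L^2(I;V)$ convergence in the standard way. The initial conditions \eqref{initcon:phi}--\eqref{initcon:theta} follow from the continuity-in-time afforded by the $\partial_t$ bounds (the solutions lie in $C_w(\overline I;H)$). Finally, the energy inequality \eqref{energyineq} is obtained by integrating the Galerkin energy identity over $(0,t)$ and using weak lower semicontinuity of the convex/quadratic terms in $\mathbb{E}$ and $\int_0^t\mathbb{D}$ (Fatou / l.s.c. of norms under weak convergence), together with convergence of the right-hand side forcing integrals; the inequality rather than equality arises precisely because of this lower semicontinuity and the need to pass the limit inside $\int F(\varphi_n)$ via Fatou, for which (A4) gives the bound from below.

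The main obstacle I expect is the treatment of the potential term $\int_\Omega F(\varphi_n)$ and $\eta F'(\varphi_n)\nabla\varphi_n$-type contributions under only a.e. convergence of $\varphi_n$ (not $\nabla\varphi_n$): one must carefully combine (A5) (polynomial growth giving equi-integrability of $F'(\varphi_n)$ in $L^p$ with $p>1$, hence of $F'(\varphi_n)\nabla\varphi_n$-products with the weakly convergent $\nabla\varphi_n$) with the energy bound on $\int F(\varphi_n)$ to identify the limit and to get the correct $L^{4/d}(I;V^*)$ time-regularity of $\partial_t\varphi$. A secondary delicate point is ensuring the chemical potential $\mu_n$ is bounded in $L^2(I;V)$ independently of $n$ — the gradient bound $\|\nabla\mu_n\|$ comes directly from the dissipation $\mathbb{D}$, but controlling the spatial average of $\mu_n$ requires the standard nonlocal Cahn--Hilliard estimate on $\int_\Omega\rho(x,\varphi_n)\,dx$ using (A3) (which makes $F'' + a \ge c_0 > 0$, i.e. $\rho$ strictly monotone in $\varphi$) together with mass conservation.
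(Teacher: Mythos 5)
Your proposal follows essentially the same route as the paper: spectral Galerkin with Yosida-regularized initial data for $\varphi_0$, testing with $\mu^n,\bu^n,\theta^n$ and weighting by $1/\ell_c$, $1/(\mathcal{K}\ell_c)$, $1/\ell_h$ to exploit exactly the cancellations you describe, coercivity from (A4), Grönwall, Aubin--Lions--Simon, and lower semicontinuity for the final inequality. Two small caveats: the discrete energy balance is \emph{not} exact as you claim, because $\mu^n=P^n(\rho-J\ast\varphi^n+\ell_c\theta^n)$ and the projection does not commute with the convolution, leaving the commutator term $(\nabla(J\ast\varphi^n)-\nabla P^n(J\ast\varphi^n),\nabla\mu^n)_\Omega$ on the right-hand side, which must be (and in the paper is) absorbed via Young's convolution inequality; and since (A5) only yields $F'(\varphi^n)$ bounded in $L^\infty(I;L^p)$ with $p\in(1,2]$, the paper estimates $\partial_t\varphi^n$ and $\partial_t\theta^n$ in the weaker duals $V_s^*$, $s\ge 2$, recovering $L^{4/d}(I;V^*)$ only for the limit after $\rho\in L^2(I;V)$ is identified — your direct claim of a uniform $L^{4/d}(I;V^*)$ bound at the Galerkin level would need to be rerouted through $\nabla\mu^n$ rather than $\nabla\rho(\cdot,\varphi^n)$. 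Neither point changes the architecture of the argument.
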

\begin{proof}
The existence of solutions is established using spectral Galerkin method. We project the system \eqref{weak:phi}-\eqref{weak:theta} onto the finite dimensional spaces spanned by the orthonormal eigenfunctions of $H$ and $H_\sigma$. To be precise, we use the finite dimensional spaces $H^n:= \textrm{span}\{\psi_j \}_{j=1}^n$ and $H_\sigma^n:= \textrm{span}\{{\bv}_j \}_{j=1}^n$. We also utilize the orthogonal projections $P_\sigma^n:H_\sigma\to H_\sigma^n$ and $P^n:H\to H^n$ respectively defined as $P_\sigma^n {\bu} = \sum_{j=1}^n ({\bu},{\bv}_j)_\Omega {\bv}_j$ and $P^n\varphi = \sum_{j=1}^n (\varphi,\psi_j)_\Omega \psi_j$ for any ${\bu}\in H_\sigma$ and $\varphi\in H$. 

The projected variational problem now reads
\begin{subequations}
    \begin{align}
        \begin{aligned}
            &\langle \partial_t\varphi^n(t),\psi \rangle_V +  ({\bu}^n(t)\cdot\nabla\varphi^n(t),\psi)_\Omega + (\nabla\rho^n(t),\nabla\psi)_\Omega  \\ & = (\nabla(J\ast\varphi^n(t)),\nabla\psi)_\Omega - \ell_c(\nabla\theta^n(t),\nabla\psi)_\Omega,
        \end{aligned}\label{weakdisc:phi}
    \end{align}
    \begin{align}
        \begin{aligned}
            &\langle \partial_t{\bu}^n(t),{\bv} \rangle_{V_\sigma} + (({\bu}^n(t)\cdot\nabla){\bu}^n(t),{\bv} )_\Omega + 2(\nu(\varphi^n)\mathrm{D}{\bu}^n(t),\mathrm{D}{\bv}) \\& = \mathcal{K}(\bv\cdot\nabla\varphi^n(t), \mu^n(t) - \ell_c\theta^n(t) )_\Omega + (\ell(\varphi^n(t),\theta^n(t)){\bg},{\bv} )_\Omega + \langle{\bq}^n(t),{\bv})\rangle_{V_\sigma},
        \end{aligned}\label{weakdisc:u}
    \end{align}   
    \begin{align}
        \begin{aligned}
            & \langle \partial_t\theta^n(t),\vartheta  \rangle_V -  \ell_h\langle \partial_t\varphi^n(t),\vartheta \rangle_V +  \kappa(\nabla\theta^n(t),\nabla\vartheta)\\ & = ({\bu}^n(t)\cdot\nabla(\ell_h\varphi^n(t)-\theta^n(t)),\vartheta)_\Omega + ({\bg}\cdot{\bu}^n(t),\vartheta)_\Omega + \langle z^n(t),\vartheta\rangle_V
        \end{aligned}\label{weakdisc:theta}
    \end{align} 			
\end{subequations}
with $\rho(\cdot,\varphi^n) := a(\cdot)\varphi^n + F'(\varphi^n)$, $\mu^n= P^n(\rho(\cdot,\varphi^n) - J\ast \varphi^n + \ell_c\theta^n)$, $\varphi^n(0) = \varphi^n_0$, ${\bu}^n(0) = {\bu}_0^n$, and $\theta^n(0) = \theta^n_0$. Here the discrete initial conditions for the fluid velocity and temperature are obtained using the projections, i.e.,  ${\bu}_0^n = P_\sigma^n {\bu}_0$ and $\theta^n_0 = P^n\theta_0$, while the initial condition for order parameter is obtained as $\varphi^n_0 = (I + \frac{1}{n}B)^{-1}\varphi_0\in D(B)$.

Using the ansatz 
\begin{align*}
	\varphi^n(t) = \sum_{j=1}^{n} a^j(t)\psi_j, \quad {\bu}^n(t) = \sum_{j=1}^n b^j(t){\bv}_j, \quad \theta^n(t) = \sum_{j=1}^{n} c^j(t)\psi_j
\end{align*}
system \eqref{weakdisc:phi}-\eqref{weakdisc:theta} can be written as an ode involving the variables $\boldsymbol{a}(t):= [a^1(t),a^2(t),\ldots,a^n(t)]^\top$, $\boldsymbol{b}(t):= [b^1(t),b^2(t),\ldots,b^n(t)]^\top$, and $\boldsymbol{c}(t):= [c^1(t),c^2(t),\ldots,c^n(t)]^\top$ whose existence on the space $C^1([0,t^n];\mathbb{R}^n)$, for some $t^n\in (0,+\infty]$, can be established quite easily using, for example, Cauchy--Lipschitz theorem.

We now derive {\it a priori} estimates that establish $t^n=T$, and that the elements $\varphi^n$, ${\bu}^n$ and $\theta^n$ are uniformly bounded under appropriate norms. For the meantime, we shall drop the time variable $t$ for simplicity. Let us begin with taking $\psi = \mu^n(t)$ in \eqref{weakdisc:phi} to get 
\begin{align}
\begin{aligned}
	( \partial_t\varphi^n,\mu^n )_\Omega + &\, ({\bu}^n\cdot\nabla\varphi^n,\mu^n)_\Omega + (\nabla\rho^n,\nabla\mu^n)_\Omega = (\nabla(J\ast\varphi^n),\nabla\mu^n)_\Omega - \ell_c(\nabla\theta^n,\nabla\mu^n)_\Omega.
	\label{eq:1}
\end{aligned}
\end{align}
Here, we used the notation $\rho^n(t) := P^n\rho(\cdot,\varphi^n(t)) = \mu^n(t) + P^n(J\ast\varphi^n(t))-\ell_c P^n( \theta^n )$.

Furthermore, the first term on the left-hand side of \eqref{eq:1} can be written by using the explicit expression for $\mu^n$ and using the fact that $(\phi_1,J\ast\phi_2)_\Omega = (\phi_2,J\ast\phi_1)_\Omega$ from the property $J(x) = J(-x)$. Indeed, we have
\begin{align}
	\begin{aligned}
		&( \partial_t\varphi^n,\mu^n )_\Omega = (\partial_t\varphi^n, a\varphi^n + F'(\varphi^n) - J\ast \varphi^n + \ell_c\theta^n)_\Omega\\
		&= \frac{1}{2}\frac{d}{dt}\left(\int_\Omega a|\varphi^n(x)|^2 \du x + 2\int_\Omega F(\varphi^n(x)) \du x  - \int_\Omega \varphi^n(x) (J\ast\varphi^n(x)) \du x\right) + \ell_c( \partial_t\varphi^n,\theta^n )_\Omega\\
		&= \frac{1}{2}\frac{d}{dt}\left( \mathbb{E}_{nl}(\varphi^n)
            + 2 \int_\Omega F(\varphi^n(x))\du x \right) + \ell_c( \partial_t\varphi^n,\theta^n )_\Omega.
	\end{aligned}\label{eq:2}
\end{align}
Using the definition of $\rho^n$, and \eqref{eq:2} we rewrite \eqref{eq:1} as
\begin{align}
\begin{aligned}
		&\frac{1}{2}\frac{d}{dt}\left( \mathbb{E}_{nl}(\varphi^n) + 2 \int_\Omega F(\varphi^n(x))\du x \right)  + \|\nabla\mu^n\|^2\\ & = ({\bu}^n\cdot\nabla\mu^n,\varphi^n)_\Omega + (\nabla J\ast\varphi^n,\nabla\mu^n)_\Omega - (\nabla P^n(J\ast\varphi^n),\nabla\mu^n)_\Omega -\ell_c( \partial_t\varphi^n,\theta^n )_\Omega.
	\end{aligned}\label{energyeq:phi}
\end{align}

Let us now test \eqref{weakdisc:u} with ${\bv} = {\bu}^n(t)$ to get 
\begin{align}
	\begin{aligned}
		&\frac{1}{2}\frac{d}{dt}\|{\bu}^n\|^2 +  2\|\sqrt{\nu(\varphi)}\mathrm{D}{\bu}^n\|^2 = \mathcal{K}({\bu}^n\cdot\nabla\varphi^n, \mu^n - \ell_c\theta^n )_\Omega + (\ell(\varphi^n,\theta^n){\bg},{\bu}^n )_\Omega+ \langle{\bq}^n,{\bu}^n\rangle_{V_\sigma}
	\end{aligned}\label{energyeq:u}
\end{align}
Similarly, substituting $\vartheta = \theta^n(t)$ into \eqref{weakdisc:theta} gives us 
\begin{align}
\begin{aligned}
&\frac{1}{2}\frac{d}{dt}\|\theta^n\|^2 + \kappa\|\nabla\theta^n\|^2 =  \ell_h (\partial_t\varphi^n,\theta^n)_\Omega + \ell_h({\bu}^n\cdot\nabla\varphi^n,\theta^n)_\Omega + ({\bg}\cdot{\bu}^n,\theta^n)_\Omega + \langle z^n,\theta^n\rangle_{V}
\end{aligned}\label{energyeq:theta}
\end{align}

To reconcile the computations above we divide the equations \eqref{energyeq:phi}, \eqref{energyeq:u} and \eqref{energyeq:theta} by the constants $\ell_c$, $\mathcal{K}\ell_c$ and $\ell_h$, respectively, and add the results together. Such process yields
\begin{align}
\begin{aligned}
	&\frac{d}{dt}\mathbb{E}(\varphi^n(t),{\bu}^n(t),\theta^n(t)) + \mathbb{D}(\varphi^n(t),\mu^n(t),{\bu}^n(t),\theta^n(t)) \\ 
	&=  \frac{1}{\ell_c}\left\{ (\nabla J\ast\varphi^n(t),\nabla\mu^n(t))_\Omega - (\nabla P^n(J\ast\varphi^n(t)),\nabla\mu^n(t))_\Omega \right\} \\
	&\ \ \  + \frac{1}{\mathcal{K}\ell_c}\left\{(\ell(\varphi^n(t),\theta^n(t)){\bg},{\bu}^n(t) )_\Omega + \langle{\bq}^n(t),{\bu}^n(t)\rangle_{V_\sigma}\right\}\\
	 &\ \ \ +  \frac{1}{\ell_h}\left\{({\bg}\cdot{\bu}^n(t),\theta^n(t))_\Omega + \langle z^n(t),\theta^n(t)\rangle_{V}\right\}
\end{aligned}\label{eqreconcile}
\end{align}

The next step is to gain some estimates for the right-hand side of \eqref{eqreconcile}. Let us begin with the terms multiplied with $1/\ell_c$:
\begin{align}
\begin{aligned}
    |(&\nabla J\ast\varphi^n(t),\nabla\mu^n(t))_\Omega  - (\nabla P^n(J\ast\varphi^n(t)),\nabla\mu^n(t))_\Omega|\\
    &\le  \|\nabla J\|_{L^1}\|\varphi^n(t)\|\|\nabla\mu^n(t)\| + \|\nabla P^n(J\ast\varphi^n(t)) \|\|\nabla\mu^n(t)\|   \\
    & \le  \|\nabla J\|_{L^1}\|\varphi^n(t)\|\|\nabla\mu^n(t)\| + \|B^{1/2} P^n(J\ast\varphi^n(t)) \|\|\nabla\mu^n(t)\|  \\
    &  \le \|\nabla J\|_{L^1}\|\varphi^n(t)\|\|\nabla\mu^n(t)\| + \|\nabla J\ast\varphi^n(t) \|\|\nabla\mu^n(t)\|  \\
    &  \le  c\|J\|_{W^{1,1}}\|\varphi^n(t)\|\|\nabla\mu^n(t)\| \le \frac{1}{2}\|\nabla\mu^n(t)\|^2 + \frac{c}{2}\|J\|_{W^{1,1}}^2\|\varphi^n(t)\|^2
\end{aligned}\label{ineqlc}
\end{align}
In the computations above, we utilized H{\"o}lder's inequality as well as Young's convolution inequality. The last estimate, on the other hand is achieved using Young's inequality.
Similarly, the terms with $1/\mathcal{K}\ell_c$ are estimated with the H{\"o}lder's and Young's inequalities, and the constancy of the gravity $\bg$. The computation is shown below:
\begin{align}
    \begin{aligned}
        | (\ell(&\varphi^n(t) ,\theta^n(t)){\bg},{\bu}^n(t) )_\Omega + \langle {\bq}^n(t),{\bu}^n(t)\rangle_{V_\sigma}|\\
        & \le \|\ell(\varphi^n(t),\theta^n(t)){\bg}\| \|{\bu}^n(t)\| + \| {\bq}^n(t)\|_{V_\sigma^*}\|{\bu}^n(t)\|_{V_\sigma}\\
        & \le c\left( 1 + \frac{\alpha\mathcal{K}}{2}\|\varphi^n(t)\|^2 + \frac{\mathcal{K}\ell_c}{4\ell_h}\|\theta^n(t)\|^2  + \frac{1}{4}\|{\bu}^n(t)\|^2 \right) + \frac{c_{\rm K}}{4\underline{\nu}}\| {\bq}^n(t)\|_{V_\sigma^*}^2 + \frac{\underline{\nu}}{c_{\rm K}}\|{\bu}^n(t)\|_{V_\sigma}^2,
    \end{aligned}\label{ineqKlc}
\end{align}
where $c>0$ depends on $\mathcal{K}$, $\ell_h$, $\ell_c$, $\alpha$ (which is defined as $\alpha:= 2c_1 - \|J\|_{L^1} > 0$ based on assumption (A4)) and $\bg$, the constant $c_{\rm K}$ on the other hand corresponds to that in Korn's inequality.  Lastly, we employ the same techniques we previously utilized to the terms multiplied with $1/\ell_h$
\begin{align}
    \begin{aligned}
        |({\bg}&\cdot{\bu}^n(t),\theta^n(t))_\Omega + \langle z^n(t),\theta^n(t)\rangle_{V}|\\ & \le c\left( \frac{1}{4}\|\theta^n(t)\|^2  + \frac{\ell_h}{4\mathcal{K}\ell_c}\|{\bu}^n(t)\|^2 \right) + \frac{1}{2\kappa}\|z^n(t)\|_{V^*}^2 + \frac{\kappa}{2}\|\theta^n(t)\|_V^2.
    \end{aligned}\label{ineqlh}
\end{align}
Plugging estimates \eqref{ineqlc}, \eqref{ineqKlc} and \eqref{ineqlh} into \eqref{eqreconcile}, and using the lower bound for the viscosity and Korn's inequality will yield
\begin{align}
    \begin{aligned}
        &\frac{d}{dt}\mathbb{E}(\varphi^n(t),{\bu}^n(t),\theta^n(t)) + \widehat{\mathbb{D}}(\mu^n(t),{\bu}^n(t),\theta^n(t)) \\
        & \le c\left( 1 + \frac{\alpha}{2\ell_c}\|\varphi^n(t)\|^2 + \frac{1}{2\ell_h}\|\theta^n(t)\|^2  + \frac{1}{2\mathcal{K}\ell_c}\|{\bu}^n(t)\|^2 \right)   \\
        &\ \ \ + \frac{c}{2}\|J\|_{W^{1,1}}^2\|\varphi^n(t)\|^2 + \frac{c_{\rm K}}{4\underline{\nu}\mathcal{K}\ell_c}\| {\bq}^n(t)\|_{V_\sigma^*}^2 + \frac{1}{2\kappa\ell_h}\|z^n(t)\|_{V^*}^2,
    \end{aligned}\label{energy:incomplete}
\end{align}
where $\widehat{\mathbb{D}}(\mu,{\bu},\theta) := \frac{1}{2\ell_c}\|\nabla\mu\|^2 + \frac{\underline{\nu}}{c_{\rm K}\mathcal{K}\ell_c}\|{\bu}\|_{V_\sigma}^2 + \frac{\kappa}{2\ell_h}\|\nabla\theta\|^2 $.

To this end, let us note that the contribution of $\varphi^n$ in the total energy functional $\mathbb{E}$ can be estimated from below. First, we can rewrite such expression using \eqref{defn:aundconv} and bound it below using Young's convolution inequality:
\begin{align*}
\begin{aligned}
    \frac{1}{2}&\int_\Omega\int_\Omega J(x-y)(\varphi^n(x) - \varphi^n(y))^2 \du x\du y + 2\int_\Omega F(\varphi^n(x)) \du x\\
    & = \int_\Omega a(x)\varphi^n(x)^2\du x - \int_\Omega (J\ast \varphi^n)(x)\varphi^n(x)\du x + 2 \int_\Omega F(\varphi^n(x)) \du x\\
    & \ge \int_\Omega a(x)\varphi^n(x)^2\du x - \|J\|_{L^1}\int_\Omega \varphi^n(x)^2 \du x + 2 \int_\Omega F(\varphi^n(x)) \du x.
\end{aligned}
\end{align*}
From assumption (A4) and the nonegativity of $a(\cdot)$ we further minorize the left-hand side of the expression above:
\begin{align}
\begin{aligned}
    \frac{1}{2}&\int_\Omega\int_\Omega J(x-y)(\varphi^n(x) - \varphi^n(y))^2 \du x\du y + 2\int_\Omega F(\varphi^n(x)) \du x\\
    & \ge \int_\Omega a(x)\varphi^n(x)^2\du x - \|J\|_{L^1}\int_\Omega \varphi^n(x)^2 \du x + 2 \int_\Omega F(\varphi^n(x)) \du x\\
    & \ge  (2c_1- \|J\|_{L^1} )\int_\Omega \varphi^n(x)^2 \du x - 2c_2|\Omega| =:  \alpha \|\varphi^n\|^2 - c.
\end{aligned}\label{phicomponentE}
\end{align}

Integrating \eqref{energy:incomplete} over $(0,t)$ for $t\in (0,t^n]$ and utilizing \eqref{phicomponentE} -- after some rearrangement -- gives us
\begin{align*}
    \begin{aligned}
        &\widehat{\mathbb{E}}(\varphi^n(t),{\bu}^n(t),\theta^n(t))+ \int_0^t \widehat{\mathbb{D}}(\mu^n(s),{\bu}^n(s),\theta^n(s))\du s \\
        & \le \mathbb{E}(\varphi^n(0),{\bu}^n(0),\theta^n(0)) + \frac{c_{\rm K}}{4\underline{\nu}\mathcal{K}\ell_c}\int_0^t \| {\bq}^n(s\|_{V_\sigma^*}^2\du s + \frac{1}{2\kappa\ell_h}\int_0^t \|z^n(s)\|_{V^*}^2 \du s  \\
        &\ \ \  + c\left( 1 + \int_0^t \left\{\frac{c}{2}\|J\|_{W^{1,1}}^2\|\varphi^n(s)\|^2 + \widehat{\mathbb{E}}(\varphi^n(s),\theta^n(s),{\bu}^n(s)) \right\}\du s\right) ,
    \end{aligned}
\end{align*}
where $\widehat{\mathbb{E}}(\varphi,{\bu},\theta) := \frac{\alpha}{2\ell_c}\|\varphi\|^2 + \frac{1}{2\ell_h}\|\theta\|^2  + \frac{1}{2\mathcal{K}\ell_c}\|{\bu}\|^2$. Gr{\"o}nwall's inequality assures us of the existence of a constant $c>0$ such that 
\begin{align}
    \begin{aligned}
        &\sup_{t\in (0,t^n]}\widehat{\mathbb{E}}(\varphi^n(t),{\bu}^n(t),\theta^n(t))+ \int_0^{t^n} \widehat{\mathbb{D}}(\mu^n(s),{\bu}^n(s),\theta^n(s)) \du s \\
        & \le ce^{cT}\left( 1 + \mathbb{E}(\varphi^n(0),{\bu}^n(0),\theta^n(0))  + \| {\bq}\|_{L^2(I;V_\sigma^*)} +\|z\|_{L^2(I;V^*)}\right).
    \end{aligned}\label{estimate:completeN}
\end{align}
Note that the constant $c>0$ above is independent of $n$, so that further estimate on $\mathbb{E}(\varphi^n(0),{\bu}^n(0),\theta^n(0))$ with independence on $n$ will yield uniform boundedness of the left-hand side of \eqref{estimate:completeN} which implies 
\begin{align}
    &\|\varphi^n \|_{L^\infty(I;H)} \le M\label{uniformbounds:phiH}\\ 
    &\|\nabla\mu^n \|_{L^2(I;L^2(\Omega)^2)} \le M\label{uniformbounds:mul2}\\ 
    &\|{\bu}^n \|_{L^\infty(I;H_\sigma)\cap L^2(I;V_\sigma)} \le M\label{uniformbounds:uHV}\\
    &\|{\theta}^n \|_{L^\infty(I;H)} \le M.\label{uniformbounds:thetaH}\\
    &\|\nabla{\theta}^n \|_{L^2(I;L^2(\Omega)^2)} \le M.\label{uniformbounds:thetaH1}
\end{align}
Indeed, from the definition of the orthogonal projections $P^n$ and $P^n_\sigma$ whose norms are bounded by 1 in the linear spaces $\mathcal{L}(H,H)$ and $\mathcal{L}(H_\sigma,H_\sigma)$, respectively, we see that $\|{\bu}^n(0)\|_{H_\sigma} \le \|{\bu}_0\|_{H_\sigma}$ and $\|\theta^n(0) \|_H\le \|\theta_0\|_H$. 
For the contribution of the order parameter, we first note that the Neumann operator $B$ is a maximal monotone operator hence $\varphi^n_0 \to \varphi_0$ in $H$. Hence, there exists $c>0$ independent of $n$ such that
\begin{align*}
   &\mathbb{E}_{nl}(\varphi^n_0)+ 2\int_\Omega F(\varphi^n_0(x))\du x \le c \|J\|_{L^{1}}\|\varphi^n_0\|^2 + 2\int_\Omega F(\varphi^n_0(x))\du x\\
    &\le c \|J\|_{L^{1}}\|\varphi_0\|^2 + 2\int_\Omega F(\varphi^n_0(x))\du x.
\end{align*}
The next challenge lies on controlling the nonlinear part by the initial data. As previously remarked, we can write $F$ -- by virtue of (A3) -- as $F(s) = G(s) - \frac{a^*}{2}s^2$, hence
\begin{align*}
    \int_\Omega F(\varphi^n_0(x))\du x = \int_\Omega G(\varphi^n_0(x))\du x - \frac{a^*}{2}\|\varphi^n_0\|^2 .
\end{align*}
Notably the function $h = G'$ is monotonically increasing which we can suppose to satisfy $h(0) = 0$. If we multiply $h(\varphi^n_0)$ by $-(\varphi_0 - \varphi_0^n) = -\frac{1}{n}B\varphi_0^n$ and integrate over $\Omega$ one gets
\begin{align*}
    \begin{aligned}
        \int_\Omega h(\varphi^n_0)(\varphi_0^n - \varphi_0) \du x & = -\frac{1}{n}\int_\Omega h(\varphi^n_0)B\varphi_0^n \du x = -\frac{1}{n}\int_\Omega \left\{ \nabla h(\varphi^n_0) \cdot \nabla \varphi_0^n  \right\} \du x\\
        & = -\frac{1}{n}\int_\Omega \left\{ h'(\varphi^n_0) |\nabla \varphi_0^n|^2\right\} \du x \le 0.
    \end{aligned}
\end{align*}
Expanding $G$ about $\varphi^n_0$, the convexity of $G$ then implies
\begin{align*}
    \begin{aligned}
        \int_\Omega G(\varphi^n_0) \du x \le \int_\Omega G(\varphi_0) + h(\varphi^n_0) (\varphi_0^n - \varphi_0) \du x \le \int_\Omega G(\varphi_0) \du x.
    \end{aligned}
\end{align*}
Using Fatou's lemma, we get 
\begin{align*}
    \limsup_{n\to \infty}\int_\Omega F(\varphi^n_0) \du x \le \int_\Omega G(\varphi_0) \du x - \frac{a^*}{2}\|\varphi_0\|^2 = \int_\Omega F(\varphi_0) \du x .
\end{align*}
In summary, we get the energy estimate
\begin{align*}
    \begin{aligned}
        &\sup_{t\in [0,T]}\widehat{\mathbb{E}}(\varphi^n(t),{\bu}^n(t),\theta^n(t))+ \int_0^{T} \widehat{\mathbb{D}}(\mu^n(s),{\bu}^n(s),\theta^n(s)) \du s \\
        & \le ce^{cT}\left( 1 + \mathbb{E}(\varphi_0,{\bu}_0,\theta_0) + \frac{c_{\rm K}}{4\underline{\nu}\mathcal{K}\ell_c}\| {\bq}\|_{L^2(I;V_\sigma^*)} + \frac{1}{2\kappa\ell_h}\|z\|_{L^2(I;V^*)}\right)
    \end{aligned}
\end{align*}
which validates \eqref{uniformbounds:phiH}--\eqref{uniformbounds:thetaH1}.

On the other hand, if we test \eqref{weakdisc:theta} with $\vartheta = 1$ we get 
\begin{align*}
    \left| \int_\Omega \theta^n(x)\du x \right| \le c\left( \|\theta_0\|_H + \|{\bu^n}\|_{L^\infty(I;H_\sigma)} + \|z\|_{L^2(I;V^*)} \right).
\end{align*}
The inequality above, together with \eqref{uniformbounds:uHV} and \eqref{uniformbounds:thetaH1}, imply
\begin{align}
 &\|{\theta}^n \|_{L^\infty(I;H)\cap L^2(I;V)} \le M.\label{uniformbounds:thetaHV}   
\end{align}

At this moment, we shall derive some estimates for $\varphi^n$ in $L^2(I;V)$. First, we use the definition of $\mu^n$, assumption (A3), and Young's convolution and Young's inequalities to get
\begin{align}
    \begin{aligned}
        &(\nabla\varphi^n,\nabla\mu^n)_\Omega\\ & = (\nabla\varphi^n, (F''(\varphi^n) + a)\nabla\varphi^n +\varphi^n\nabla a  - \nabla J\ast\varphi^n + \ell_c\nabla\theta^n )_\Omega\\
        & \ge c_0\|\nabla\varphi^n\|^2 - 2\|J\|_{W^{1,1}}\|\nabla\varphi^n\|\|\varphi^n\| + \ell_c(\nabla\varphi^n,\nabla\theta^n)\\
        &\ge \frac{3c_0}{4}\|\nabla\varphi^n\|^2 - \frac{4}{c_0}\|J\|_{W^{1,1}}\|\varphi^n\|^2 + \ell_c(\nabla\varphi^n,\nabla\theta^n)
    \end{aligned}\label{estimate:phiV}
\end{align}
Using H{\"o}lder's and Young's inequalities on $(\nabla\varphi^n,\nabla\mu^n)_\Omega$, on the other hand, one gets $(\nabla\varphi^n,\nabla\mu^n)_\Omega \le (c_0/4)\|\nabla\varphi^n\|^2 + (1/c_0)\|\nabla\mu^n\|^2$. Combining this with \eqref{estimate:phiV} we have
\begin{align}
    \begin{aligned}
        & \frac{c_0}{2}\|\nabla\varphi^n\|^2 \le \left| \frac{1}{c_0}\|\nabla\mu^n\|^2 + \frac{4}{c_0}\|J\|_{W^{1,1}}\|\varphi^n\|^2 - \ell_c(\nabla\varphi^n,\nabla\theta^n)\right|\\
        & \le \frac{1}{c_0}\|\nabla\mu^n\|^2 + \frac{4}{c_0}\|J\|_{W^{1,1}}\|\varphi^n\|^2 + \frac{\ell_c^2}{c_0}\|\nabla\theta^n\| + \frac{c_0}{4}\|\nabla\varphi^n\|^2.
    \end{aligned}\label{estimate:nablaphi}
\end{align}
Moving the last term on the right-hand side of \eqref{estimate:nablaphi} to the other side, utilizing the total mass conservation of the order parameter, and the inequalities \eqref{uniformbounds:phiH}, \eqref{uniformbounds:mul2} and \eqref{uniformbounds:thetaHV} we get the uniform boundedness of $\varphi^n$ in ${L^2(I;V)}$, i.e.,
\begin{align}
    \|\varphi^n\|_{L^2(I;V)} \le M.\label{estimate:phiVV}
\end{align} 
To strengthen \eqref{uniformbounds:mul2} let us prove that the average of $\mu^n$ -- which we denote as $\widehat{\mu^n}:= \frac{1}{|\Omega|}\int_\Omega \mu^n\du x$-- over $\Omega$ is uniformly bounded as well.
Indeed, by the definition of $\mu^n$ and by assumption (A5) we have
\begin{align*}
    \begin{aligned}
        \left|\int_\Omega \mu^n \du x \right| & = \left|\int_\Omega P^n(a\varphi^n - J\ast\varphi^n + F'(\varphi^n) + \ell_c\theta^n) \du x \right|\\
        & \le \int_\Omega |F'(\varphi^n)| \du x + \ell_c\sqrt{|\Omega|}\|\theta^n\|\\
        & \le c_3 \int_\Omega |F(\varphi^n)| \du x + c_4|\Omega| + \ell_c\sqrt{|\Omega|}\|\theta^n\|.
    \end{aligned}
\end{align*}
Here we used the fact that $\int_\Omega P^n(a\varphi^n - J\ast\varphi^n)\du x = 0$. To close the estimate above we note that $\|F(\varphi^n) \|_{L^1}$ can be shown to be uniformly bounded by integrating \eqref{energy:incomplete} over $(0,T)$, using Gronwall's inequality and using the upper bound for $\mathbb{E}(\varphi^n(0),{\bu}^n(0),\theta^n(0))$. The boundedness above, together with \eqref{uniformbounds:mul2} and Poincar{\'e}-Wirtinger inequality implies
\begin{align}
    \|\mu^n \|_{L^2(I;V)} \le M. \label{boundedness:muV}
\end{align}
Before we move on, we mention -- by virtue of assumption (A5) -- that $\|\rho(\cdot,\varphi^n)\|_{L^\infty(I;L^p(\Omega))}$ is uniformly bounded where $p\in(1,2]$ is as in the aforementioned assumption. Indeed, we have
\begin{align*}
    \begin{aligned}
        \|\rho(\cdot,\varphi^n)\|_{L^p} & \le \|a\|_{L^\infty}\|\varphi^n\|_{L^p} + \left( \int_\Omega |F'(\varphi^n(x))|^p \du x \right)^{1/p}\\
        & \le \|a\|_{L^\infty}\|\varphi^n\| + \left( \int_\Omega c_3|F(\varphi^n(x))| + c_4 \du x \right)^{1/p}\\
        & \le \|a\|_{L^\infty}\|\varphi^n\| + c_3\|F(\varphi^n)\|_{L^1}^{1/p} + c_4{|\Omega|}^{1/p}.
    \end{aligned}
\end{align*}
The uniform boundedness follows from the boundedness of $\|F(\varphi^n)\|_{L^1}$ and estimate \eqref{uniformbounds:phiH}, i.e.
\begin{align}
    \|\rho(\cdot,\varphi^n)\|_{L^\infty(I;L^p(\Omega))} \le M.\label{estimate:rho}
\end{align}

Our next aim is to establish some boundedness for the time differentiated variables. Let us begin with the time derivative of the fluid velocity ${\bu}^n$. We rewrite \eqref{weakdisc:u} as 
\begin{align}
    \begin{aligned}
\langle& \partial_t{\bu}^n(t),{\bv}\rangle_{V_\sigma} +  \langle P^n_\sigma\mathcal{B}(\bu^n(t)),{\bv} \rangle_{V_\sigma} + \langle P^n_\sigma\mathcal{A}({\bu}^n(t),\varphi^n(t)),\bv\rangle_{V_\sigma}\\ & = \mathcal{K}\langle P^n_\sigma\mathcal{C}_2(\varphi^n(t),\mu^n(t) - \ell_c\theta^n(t)),{\bv} \rangle_{V_\sigma} + (P^n_\sigma\ell(\varphi^n(t),\theta^n(t)){\bg},{\bv} )_\Omega + \langle P^n_\sigma{\bq}(t),{\bv})\rangle_{V_\sigma}.
\end{aligned}\label{projectedeq:u}
\end{align}
Using the antisymmetry of $b({\bu};\cdot,\cdot)$ for ${\bu}\in V_\sigma$, and the estimates \eqref{trilinholder3d} and \eqref{trilinholder2d} we get
\begin{align*}
    \begin{aligned}
        \| P^n_\sigma\mathcal{B}(\bu^n(t))\|_{V_\sigma^*}&\le  \inf_{\substack{{\bv}\in V_\sigma\\ \|{\bv}\|_{V_\sigma} = 1}} |b(\bu^n(t);\bu^n(t),{\bv})| \\
        &\le \inf_{\substack{{\bv}\in V_\sigma\\ \|{\bv}\|_{V_\sigma} = 1}}  c\left\{\begin{matrix}
    \|\bu^n(t)\|_{H_\sigma}^{1/2} \|\bu^n(t)\|_{V_\sigma}^{3/2} \|\bv\|_{V_\sigma}& \text{for }d = 3\\
    \|\bu^n(t)\|_{H_\sigma} \|\bu^n(t)\|_{V_\sigma}  \|\bv\|_{V_\sigma}& \text{for }d = 2
        \end{matrix} \right.\\
        &\le  c\left\{\begin{matrix}
    \|\bu^n(t)\|_{H_\sigma}^{1/2} \|\bu^n(t)\|_{V_\sigma}^{3/2} & \text{for }d = 3\\
    \|\bu^n(t)\|_{H_\sigma} \|\bu^n(t)\|_{V_\sigma}  & \text{for }d = 2.
        \end{matrix} \right.
    \end{aligned}
\end{align*}
For $d=3$, we thus see that
\begin{align}
    \| P^n_\sigma\mathcal{B}({\bu}^n)\|_{L^{4/3}(I;V_\sigma^*)} \le \|{\bu}^n\|_{L^\infty(I;H_\sigma)}^{1/2}\|{\bu}^n\|_{L^2(I;V_\sigma)}^{3/2}\le M^2, \label{estimate:trilinD3}
\end{align}
while when $d=2$ we have 
\begin{align}
        \| P^n_\sigma\mathcal{B}({\bu}^n)\|_{L^{2}(I;V_\sigma^*)} \le \|{\bu}^n\|_{L^\infty(I;H_\sigma)}\|{\bu}^n\|_{L^2(I;V_\sigma)}\le M^2.\label{estimate:trilinD2}
\end{align}
For the viscosity term we use H{\"o}lder's inequality and assumption (A2) for both $d=2,3$ to get
\begin{align}
\begin{aligned}
    \| P^n_\sigma\mathcal{A}({\bu}^n(t),\varphi^n(t))\|_{V_\sigma^*}  = \inf_{\substack{{\bv}\in V_\sigma\\ \|{\bv}\|_{V_\sigma} = 1}} 2(\nu(\varphi^n(t))\mathrm{D}{\bu}^n(t),\mathrm{D}{\bv})_{\Omega}& \\
    \le 2 \inf_{\substack{{\bv}\in V_\sigma\\ \|{\bv}\|_{V_\sigma} = 1}} \|\nu(\varphi^n(t))\mathrm{D}{\bu}^n(t) \| \|\mathrm{D}{\bv}\| \le 2\overline{\nu} \|\bu^n(t)\|_{V_\sigma}. &
\end{aligned}\label{estimate:dissipation}
\end{align}
This implies that $\| P^n_\sigma\mathcal{A}({\bu}^n,\varphi^n)\|_{L^2(I;V_\sigma^*)}\le M$. 

For the term that takes into account the capillarity we first deal with the case $d=3$ as follows:
\begin{align}
    \begin{aligned}
        \| P^n_\sigma \mathcal{C}_2(\varphi^n(t),\mu^n(t) - \ell_c\theta^n(t)) \|_{V_\sigma^*}
        &= \inf_{\substack{{\bv}\in V_\sigma\\ \|{\bv}\|_{V_\sigma} = 1}} | \langle P^n_\sigma\mathcal{C}_2(\ell_c\theta^n(t) -\mu^n(t),\varphi^n(t)),{\bv} \rangle_{V_\sigma}|\\
        & = \inf_{\substack{{\bv}\in V_\sigma\\ \|{\bv}\|_{V_\sigma} = 1}} |(\varphi^n(t)\nabla(\mu^n(t) - \ell_c\theta^n(t)),{\bv} )_\Omega|\\
        & \le \inf_{\substack{{\bv}\in V_\sigma\\ \|{\bv}\|_{V_\sigma} = 1}} \|\varphi^n(t)\|_{L^3}\|\nabla(\mu^n(t) - \ell_c\theta^n(t))\|\|{\bv}\|_{L^6} \\
        & \le \inf_{\substack{{\bv}\in V_\sigma\\ \|{\bv}\|_{V_\sigma} = 1}} \|\varphi^n(t)\|_{L^3}\|\nabla(\mu^n(t) - \ell_c\theta^n(t))\|\|{\bv}\|_{V_\sigma} .
    \end{aligned}\label{capillaritylatence}
\end{align}
We note that to be able to reach the third line, we used H{\"o}lder's inequality, while Rellich-Kondrachov embedding theorem --- which ensures us of the continuous embeddings $W^{1,2}(\Omega)\hookrightarrow L^{6}(\Omega)$ --- to get the last inequality. Now, the interpolation of $L^p$ spaces, with $\theta = \frac{1}{2}$, $p_1=6$ and $p_2 = 2$, gives us \begin{align}
\|\varphi\|_{L^3}\le c\|\varphi\|^{1/2}_{L^6}\|\varphi\|^{1/2}.\label{interpolate}
\end{align}
Using \eqref{interpolate} to \eqref{capillaritylatence}, and the continuous embedding $W^{1,2}(\Omega)\hookrightarrow L^{6}(\Omega)$ lead to
\begin{align*}
    \begin{aligned}
        \| P^n_\sigma\mathcal{C}_2(\varphi^n(t),\mu^n(t) - \ell_c\theta^n(t)) \|_{V_\sigma^*}
        & \le  c\|\varphi^n(t)\|^{1/2}\|\varphi^n(t)\|^{1/2}_{L^6}\|\nabla(\mu^n(t) - \ell_c\theta^n(t))\| \\
        & \le  c\|\varphi^n(t)\|^{1/2}\|\varphi^n(t)\|_{V}^{1/2}\|\nabla(\mu^n(t) - \ell_c\theta^n(t))\| .
    \end{aligned}
\end{align*}
For the next computations, we employ Young's inequality $\alpha\beta \le \frac{\varepsilon}{p}\alpha^p_1 + \frac{1}{p_2\varepsilon^{p'/p}}\beta^{p'}$, which holds for any $\varepsilon,\alpha,\beta>0$ and $1/p_1 + 1/p_2 = 1$. Taking $p_1 = 3$, $p_2=3/2$, $\alpha = \|\varphi^n(t)\|_{V}^{2/3}$, and $\beta = \|\nabla(\mu^n(t) - \ell_c\theta^n(t))\|^{4/3}$ and from the estimates \eqref{uniformbounds:phiH}, \eqref{uniformbounds:mul2}, \eqref{uniformbounds:thetaHV} and \eqref{estimate:phiVV} we get the following estimates for the three dimensional case:
\begin{align}
    \begin{aligned}
        &\| P^n_\sigma\mathcal{C}_2(\varphi^n,\mu^n - \ell_c\theta^n) \|_{L^{4/3}(I;V_\sigma^*)}\\ & = c\left( \int_0^T \left( \|\varphi^n(t)\|^{\frac{1}{2}}\|\varphi^n(t)\|_{V}^{\frac{1}{2}}\|\nabla(\mu^n(t) - \ell_c\theta^n(t))\|\right)^{\frac{4}{3}} \du t \right)^{\frac{3}{4}}\\
        & \le c\|\varphi^n\|_{L^\infty(I;H)}^{\frac{1}{2}}\left( \int_0^T \|\varphi^n(t)\|_{V}^{\frac{2}{3}}\|\nabla(\mu^n(t) - \ell_c\theta^n(t))\|^{\frac{4}{3}}\du t \right)^{\frac{3}{4}}\\
        & \le c\|\varphi^n\|_{L^\infty(I;H)}^{\frac{1}{2}}\left\{ \|\varphi^n \|_{L^2(I;V)}^{\frac{3}{2}} + \|\nabla(\mu^n - \ell_c\theta^n(t) )\|_{L^2(I;L^2(\Omega)^2)}^{\frac{3}{2}}\right\}\\
        & \le M^2.
    \end{aligned}\label{estimate:capillarityD3}
\end{align}

For the two dimensional case, we rewrite $(\mu^n-\ell_c\theta^n)\nabla\varphi^n$ as
\begin{align}
	(\mu^n-\ell_c\theta^n)\nabla\varphi^n = \nabla\left( F(\varphi^n) + a(\cdot)\frac{(\varphi^n)^2}{2} \right) - \nabla a(\cdot) \frac{(\varphi^n)^2}{2} - (J\ast\varphi^n)\nabla\varphi^n.
\end{align}
This allows us to have
\begin{align}
	\begin{aligned}
		&\int_0^T((\mu^n(t)-\ell_c\theta^n(t))\nabla\varphi^n(t),\bv(t))_\Omega \du t\\ & \le \int_0^T |(\nabla a \frac{(\varphi(t)^n)^2}{2},\bv(t))_\Omega|\du t + \int_0^T |((\nabla J\ast\varphi^n(t))\varphi^n(t),\bv(t))_\Omega|\du t\\
		&\le c(\|\nabla a\|_{L^\infty}\| + \|\nabla J\|_{L^1})\int_0^T \|\varphi^n(t)\|^2_{L^4}\|\bv(t)\|_{H_\sigma}\du t\\
		& \le c(\|\nabla a\|_{L^\infty}\| + \|\nabla J\|_{L^1})\|\varphi^n\|^2_{L^4(I;L^4(\Omega))}\|\bv\|_{L^2(I;V_\sigma)}.
	\end{aligned}
\end{align}
From Gagliardo-Nirenberg inequality, \eqref{uniformbounds:phiH} and \eqref{estimate:phiVV}, we infer that
\begin{align}
	\begin{aligned}
		\|\varphi^n\|_{L^4(I;L^4(\Omega))}^4 & \le \int_0^T \|\varphi^n(t)\|_{L^2}^2\|\nabla\varphi^n(t)\|_{L^2}^2 \du t \le \|\varphi^n\|_{L^\infty(I;H)}^2\|\varphi^n\|_{L^2(I;V)}^2 \le M^4.
	\end{aligned}
\end{align}
Therefore,
\begin{align}\label{estimate:capillarityD2}
	\| P^n_\sigma\mathcal{C}_2(\varphi^n,\mu^n - \ell_c\theta^n) \|_{L^{2}(I;V_\sigma^*)} \le M^2.
\end{align}


The remaining terms in \eqref{projectedeq:u} are trivially handled and it can be easily shown that 
    \begin{align}
        &\|P^n_\sigma\ell(\varphi^n(t),\theta^n(t)){\bg} \|_{L^2(I;V_\sigma^*)} \le 1+2M,\label{estimate:ell}\\
        &\|P^n_\sigma {\bq}(t) \|_{L^2(I;V_\sigma^*)} \le 1+\|{\bq}(t) \|_{L^2(I;V_\sigma^*)} ,\label{estimate:q}
    \end{align}
where \eqref{estimate:ell} is achieved from \eqref{uniformbounds:phiH} and \eqref{uniformbounds:thetaHV}, while \eqref{estimate:q} is owed from the fact that $P^n_\sigma\in\mathcal{L}(V_\sigma^*,V_\sigma^*)$. Combining the estimates \eqref{estimate:trilinD3}, \eqref{estimate:trilinD2}, \eqref{estimate:dissipation}, \eqref{estimate:capillarityD3}, \eqref{estimate:capillarityD2}, \eqref{estimate:ell}, and \eqref{estimate:q}, we get the estimate
\begin{align}
    \|\partial_t{\bu}^n \|_{L^q(I;V_\sigma^*)}\le c(1+(1+M)M +\|{\bq}(t) \|_{L^2(I;V_\sigma^*)}) 
    \label{estimate:timeu}
\end{align}
for some constant $c>0$, and where $q=4/3$ if $d=3$, and $q=2$ if $d=2$.

Now let us derive estimates for the time derivative of the order parameter. To do this we first rewrite \eqref{weakdisc:phi} 
\begin{align*}
    \langle\partial_t\varphi^n(t),\psi \rangle_{V_s} + \langle P^n\mathcal{C}_1({\bu}^n(t),\varphi^n(t)),\psi \rangle_{V_s} + \langle P^nB(\mu^n(t)),\psi \rangle_{V_s} = 0,
\end{align*}
with $\mu^n = \rho(\cdot,\varphi^n) - J\ast \varphi^n + \ell_c\theta^n$. Before going further, let us mention that the estimates will highly depend on $p\in(1,2]$ from assumption (A5) due to the appearance of $\rho(\cdot,\varphi^n)$ in the definition of $\mu^n$ which we have proven to be bounded in $L^\infty(I;L^p(\Omega))$. 
Due to the limited of regularity of $\rho(\cdot,\varphi^n)$ we are compelled to consider test functions $\psi\in V_s$ for some $s\ge2$ that will allow us to utilize the Sobolev embedding $H^{s-2}(\Omega)\hookrightarrow L^{p'}(\Omega)$ where $p'$ is the H{\"o}lder conjugate of $p$. If such embedding holds then H{\"o}lder's inequality, (A1), and \eqref{estimate:rho} will imply 
\begin{align*}
    \begin{aligned}
        |\langle P^nB(\mu^n(t)),\psi \rangle_{V_s}|
        &\le |(\nabla \rho(\cdot,\varphi^n),\nabla\psi)_\Omega| + |(\nabla J\ast\varphi^n,\nabla\psi)_\Omega| + \ell_c|(\nabla \theta^n,\nabla\psi)_\Omega|\\
        &\le |(\rho(\cdot,\varphi^n),\Delta\psi)_\Omega| + \|\nabla J \|_{L^1}\|\varphi^n\|\|\nabla\psi\| + \ell_c|( \theta^n,\Delta\psi)_\Omega|\\
        &\le \|\rho(\cdot,\varphi^n)\|_{L^p}\|\Delta\psi\|_{L^{p'}} + \|\nabla J \|_{L^1}\|\varphi^n\|\|\psi\|_{V_s} + \ell_c\|\theta^n\|\|\Delta\psi\|_{L^{p'}}\\
        &\le (\|\rho(\cdot,\varphi^n)\|_{L^p}+ \|\nabla J \|_{L^1}\|\varphi^n\| + \ell_c\|\theta^n\|)\|\psi\|_{V_s}.
    \end{aligned}
\end{align*}
Using \eqref{uniformbounds:phiH}, \eqref{uniformbounds:thetaHV} and \eqref{estimate:rho}, we thus get
\begin{align*}
    \|P^nB(\mu^n(t))\|_{L^\infty(I;V_s^*)} \le M.
\end{align*}

Let us now take our gaze onto the term $\langle P^n\mathcal{C}_1({\bu}^n(t),\varphi^n(t)),\psi \rangle_{V_s}$.
We know that the embedding $H^{s-2}(\Omega)\hookrightarrow L^{p'}(\Omega)$ --- to maintain the validity of the previous computation --- holds if either of the following cases is true:
\begin{itemize}
    \item for $2\le p' \le +\infty$ if $(s-2)2>d$,
    \item for $2\le p' < +\infty$ if $(s-2)2 = d$,
    \item for $2\le p' \le 2d/(d-(s-2)2)$ if $(s-2)2<d$.
\end{itemize}

The first two cases instantly yields a good estimate for $\langle P^n\mathcal{C}_1({\bu}^n(t),\varphi^n(t)),\psi \rangle_V$. Indeed, from H{\"o}lder's inequality, and from the fact that $(s-2)2 \ge d$ implies $(s-1)2>d$ --- which validates the embedding $H^{s-1}(\Omega)\hookrightarrow L^\infty(\Omega)$ --- we get
\begin{align}
    \begin{aligned}
        |\langle P^n\mathcal{C}_1({\bu}^n(t),\varphi^n(t)),\psi \rangle_{V_s}| & = |({\bu}^n\cdot\nabla \psi,\varphi^n )_\Omega| \le \|{\bu}^n\|\|\nabla\psi\|_{L^\infty}\|\varphi^n\|\\
        &\le \|{\bu}^n\|\|\nabla\psi\|_{H^{s-1}}\|\varphi^n\|\le \|{\bu}^n\|\|\varphi^n\|\|\psi\|_{V_s}
    \end{aligned}\label{estimate:pc1i}
\end{align}

The third case and the fact that $p'$ is the H{\"o}lder conjugate of $p$ implies that it would suffice to just consider $\frac{(4-d)p + 2d}{2p} \le s < \frac{d+4}{2}$.
The arbitrariness of $p\in(1,2]$ also calls for us to consider $p\in (1,d/(d-1))$, $p=d/(d-1)$, and $p\in (3/2,2]$ exclusively for $d =3$. Let us first consider the following cases:
\begin{itemize}
    \item $\frac{(4-d)p + 2d}{2p} \le s$ and $p\in (1,d/(d-1))$,
    \item $\frac{(4-d)p + 2d}{2p} < s$ and $p = d/(d-1)$.
\end{itemize}
We see that the scenarios above imply $(s-1)2>d$, hence the embedding $H^{s-1}(\Omega)\hookrightarrow L^\infty(\Omega)$. Following the same computations as in \eqref{estimate:pc1i}, we get
\begin{align*}
    \begin{aligned}
        |\langle P^n\mathcal{C}_1({\bu}^n(t),\varphi^n(t)),\psi \rangle_{V_s}| & \le \|{\bu}^n\|\|\varphi^n\|\|\psi\|_{V_s}
    \end{aligned}
\end{align*}
Using \eqref{uniformbounds:phiH} and \eqref{uniformbounds:uHV} we see that
\begin{align*}
    \|P^n\mathcal{C}_1({\bu}^n(t),\varphi^n(t))\|_{L^\infty(I;V_s^*)} \le M^2,
\end{align*}
if either $(s-2)2\ge d$, $\frac{(4-d)p + 2d}{2p} \le s$  and  $p\in (1,d/(d-1))$, or  $\frac{(4-d)p + 2d}{2p} < s$ and  $p = d/(d-1)$.

The case $\frac{(4-d)p + 2d}{2p} = s$ and $p = d/(d-1)$ implies $(s-1)2=d$ which gives us the embedding $H^{s-1}(\Omega)\hookrightarrow L^r(\Omega)$ for any $2< r <+\infty$. Using H{\"o}lder's inequality, the embedding previously mentioned, and the interpolation in $L^p$ with $\theta = (r-4)/r$, $p = 2$, and $q = 4$, we arrive at the following computation:
\begin{align*}
    \begin{aligned}
        |\langle P^n\mathcal{C}_1({\bu}^n(t),\varphi^n(t)),\psi \rangle_{V_s}| & \le \|{\bu}^n\|\|\nabla\psi\|_{L^r}\|\varphi^n\|_{L^{\frac{2r}{r-2}}}\\ 
         & \le \|{\bu}^n\|\|\nabla\psi\|_{H^{s-1}}\|\varphi^n\|^{\frac{r-4}{r}}\|\varphi^n\|_{L^4}^{\frac{4}{r}}\\
        & \le \|{\bu}^n\|\|\psi\|_{V_s}\|\varphi^n\|^{\frac{r-4}{r}}\|\varphi^n\|_{V}^{\frac{4}{r}}.
    \end{aligned}
\end{align*}
Using \eqref{uniformbounds:phiH}, \eqref{uniformbounds:uHV} and \eqref{estimate:phiVV}, we get that the case currently being considered yields 
\begin{align*}
    \begin{aligned}
        \|P^n\mathcal{C}_1({\bu}^n,\varphi^n)\|_{L^{\frac{r}{2}}(I;V_s^*)} & \le \left(\int_0^T (\|{\bu}^n(t)\|\|\varphi^n(t)\|^{\frac{r-4}{r}}\|\varphi^n(t)\|_{V}^{\frac{4}{r}})^{\frac{r}{2}} \du t \right)^{2/r}\\
        & \le \|{\bu}^n\|_{L^\infty(I;H_\sigma)}\|\varphi^n\|_{L^\infty(I;H)}^{\frac{r-4}{r}}\|\varphi^n\|_{L^2(I;V)}^{\frac{4}{r}}
    \end{aligned}
\end{align*}

Lastly, $\frac{3}{2}<p\le 2$ and $s = \frac{(4-d)p + 2d}{2p} = \frac{1}{2} + \frac{3}{p}$ we get that $(s-1)2 < 3 = d$. This implies the embedding $H^{s-1}(\Omega)\hookrightarrow L^r(\Omega)$ with $r = \frac{3p}{2p-3}$. By letting $r'>1$ be such that $\frac{1}{r} + \frac{1}{r'} = \frac{1}{2}$, together with $L^p$ interpolation (with $\theta = \frac{20p-30}{9p}$, $p_1=5$, and $p_2=2$), and the embedding $H^1(\Omega)\hookrightarrow L^5(\Omega)$  we get the following estimate
\begin{align*}
    \begin{aligned}
        |\langle P^n\mathcal{C}_1({\bu}^n(t),\varphi^n(t)),\psi \rangle_{V_s}| & \le \|{\bu}^n\|\|\nabla\psi\|_{L^r}\|\varphi^n\|_{L^{r'}}\\
        & \le \|{\bu}^n\|\|\nabla\psi\|_{H^{s-1}}\|\varphi^n\|^{\frac{30-11p}{9p}}\|\varphi^n\|_{L^5}^{\frac{10}{9}\frac{2p-3}{p}}\\
        & \le \|{\bu}^n\|\|\psi\|_{V_s}\|\varphi^n\|^{\frac{30-11p}{9p}}\|\varphi^n\|_{V}^{\frac{10}{9}\frac{2p-3}{p}}.
    \end{aligned}
\end{align*} 
From the $L^\infty(I;H_\sigma)$ and $L^\infty(I;H)$ estimates of ${\bu}^n$ and $\varphi^n$, respectively, and the $L^2(I;V)$ estimate of $\varphi^n$ from \eqref{estimate:phiVV} we infer that
\begin{align*}
    \begin{aligned}
        \|P^n\mathcal{C}_1({\bu}^n,\varphi^n)\|_{L^{\frac{9p}{5(2p-3)}}(I;V_s^*)} & \le \left(\int_0^T \left(\|{\bu}^n(t)\|\|\varphi^n\|^{\frac{30-11p}{9p}}\|\varphi^n(t)\|_{V}^{\frac{10}{9}\frac{2p-3}{p}} \right)^{\frac{9p}{5(2p-3)}} \du t \right)^{\frac{5(2p-3)}{9p}}\\
        & \le \|{\bu}^n\|_{L^\infty(I;H_\sigma)}\|\varphi^n\|^{\frac{30-11p}{9p}}_{L^\infty(I;H)}\|\varphi^n\|_{L^2(I;V)}^{\frac{10}{9}\frac{2p-3}{p}}.
    \end{aligned}
\end{align*}
From all these we finally get the following estimates for $\partial_t\varphi^n$.
\begin{align}
    \begin{aligned}
        \|\partial_t\varphi^n\|_{ L^\infty(I;V_s^*)\cap L^{\frac{r}{2}}(I;V_{\frac{d+2}{2}}^*) } \le M+M^2,
        \label{estimate:timephi1}
    \end{aligned}
\end{align}
where $r>2$ and if any of the following cases hold:
\begin{itemize}
    \item $(s-2)2 \ge d$,
    \item $1<p<\frac{d}{d-1}$ with $\frac{d+4}{2} > s\ge \frac{(4-d)p+2d}{2p}$, or 
    \item $p = \frac{d}{d-1}$ with $\frac{d+4}{2} > s>\frac{(4-d)p+2d}{2p}$.
\end{itemize}
Lastly, if $d = 3$ and $\frac{3}{2}<p\le 2$ we have
\begin{align}
    \begin{aligned}
        \|\partial_t\varphi^n\|_{ L^{\frac{9p}{5(2p-3)}}(I;V_s^*) } \le M + M^2.
    \end{aligned}\label{estimate:timephi2}
\end{align}

For the time derivative of the temperature, we rewrite \eqref{weakdisc:theta} as 
\begin{align}
    \begin{aligned}
        &\langle\partial_tH(\theta^n(t),\varphi^n(t)),\vartheta \rangle_{V_s}+ \langle P^n\mathcal{C}_1({\bu}^n(t),H(\theta^n(t),\varphi^n(t))),\vartheta\rangle_{V_s}\\ & + \kappa\langle P^n\mathcal{B}\theta^n(t),\vartheta\rangle_{V_s} = \langle P^n({\bg}\cdot{\bu}^n(t)),\vartheta\rangle_{V_s} + \langle z^n(t),\vartheta\rangle_{V_s},
\end{aligned} \label{projectedeq:thetan}
\end{align}
where $H(\theta^n(t),\varphi^n(t))=\theta^n(t)-\ell_h\varphi^n(t)$. Apparently, we also have to work on the space $V_s$ due to the appearance of the time derivative of the order parameter. Of course, the linear parts of \eqref{projectedeq:thetan} can be handled conveniently, and in fact --- because of the embedding $H^{s-2}(\Omega)\hookrightarrow L^2(\Omega)$ --- we have
\begin{align*}
    \|P^n\mathcal{B}\theta^n(t)\|_{V_s^*} & = \inf_{\substack{{\vartheta}\in V_s\\ \|\vartheta\|_{V_s} = 1}} |\langle P^n\mathcal{B}\theta^n(t),\vartheta\rangle_{V_s}| = \inf_{\substack{{\vartheta}\in V_s\\ \|\vartheta\|_{V_s} = 1}} |( \theta^n(t),\Delta\vartheta)_\Omega|\\
    & = \inf_{\substack{{\vartheta}\in V_s\\ \|\vartheta\|_{V_s} = 1}} \| \theta^n(t)\|\|\Delta\vartheta\| \le  \| \theta^n(t)\|.
\end{align*}
Hence, we obtain from \eqref{uniformbounds:thetaHV}
\begin{align*}
    \|P^n\mathcal{B}\theta^n\|_{L^\infty(I;V_s^*)} \le \| \theta^n\|_{L^\infty(I;H)} \le M.
\end{align*}
H{\"o}lder's inequality, the constancy of the gravitational parameter $\bg$ and \eqref{uniformbounds:uHV}, on the other hand, give us
\begin{align*}
    \|P^n({\bg}\cdot{\bu}^n)\|_{L^\infty(I;V_s^*)} \le c\| {\bu}^n\|_{L^\infty(I;H_\sigma)} \le M.
\end{align*}

For the transport term, we shall take advantage of the embedding $H^{s-1}(\Omega)\hookrightarrow L^r(\Omega)$ for some $2<r\le +\infty$ as illustrated in the previous computations for the time derivative of $\varphi^n$. For this purpose, we shall skip some parts which we have covered previously, but mention the values of $r$ that we used. 

\noindent\textbf{Case 1: $r = +\infty$}. Before we begin, let us mention that due to \eqref{uniformbounds:phiH}, \eqref{uniformbounds:thetaHV} and \eqref{estimate:phiVV} we get that $H(\theta^n,\varphi^n)$ belongs to $L^\infty(I;H)\cap L^2(I;V)$, and in fact we have
\begin{align}
\begin{aligned}
    \| H(\theta^n,\varphi^n)\|_{L^\infty(I;H)\cap L^2(I;V)} \le M.
\end{aligned} \label{estimate:Hthph}
\end{align}
Such property is global and does not only cover the current case. 

Going back, we see that by utilizing H{\"o}lder's inequality, we get
\begin{align*}
\begin{aligned}
    \| P^n\mathcal{C}_1({\bu}^n(t), H(\theta^n(t),\varphi^n(t))) \|_{V_s^*} & = \inf_{\substack{{\vartheta}\in V_s\\ \|\vartheta\|_{V_s} = 1}} |\langle P^n\mathcal{C}_1({\bu}^n(t),H(\theta^n(t),\varphi^n(t))),\vartheta\rangle_{V_s}|\\
    & = \inf_{\substack{{\vartheta}\in V_s\\ \|\vartheta\|_{V_s} = 1}} |( {\bu}^n(t)\cdot\nabla \vartheta, H(\theta^n(t),\varphi^n(t)) )_\Omega|\\
    & \le \inf_{\substack{{\vartheta}\in V_s\\ \|\vartheta\|_{V_s} = 1}} \|{\bu}^n(t)\|\|\nabla\vartheta\|_{L^\infty}\|H(\theta^n(t),\varphi^n(t))\|\\
    & \le \|{\bu}^n(t)\|\|H(\theta^n(t),\varphi^n(t))\|.
\end{aligned}
\end{align*}
From \eqref{uniformbounds:phiH} and \eqref{uniformbounds:uHV} we hence achieve
\begin{align*}
    \| P^n\mathcal{C}_1({\bu}^n(t), H(\theta^n(t),\varphi^n(t))) \|_{L^\infty(I;V_s^*)} \le M^2.
\end{align*}

\noindent\textbf{Case 2: $2< r < +\infty$}. This particular case is divided in two subcases. The first one is when $\frac{(4-d)p + 2d}{2p} = s$ and $p = d/(d-1)$, which implies $2< r < +\infty$ can be chosen arbitrarily. In this instance, we see that
\begin{align*}
\begin{aligned}
    \| P^n\mathcal{C}_1({\bu}^n(t), H(\theta^n(t),\varphi^n(t))) \|_{V_s^*}
    & = \inf_{\substack{{\vartheta}\in V_s\\ \|\vartheta\|_{V_s} = 1}} |\langle P^n\mathcal{C}_1({\bu}^n(t),H(\theta^n(t),\varphi^n(t))),\vartheta\rangle_{V_s}|\\
    & = \inf_{\substack{{\vartheta}\in V_s\\ \|\vartheta\|_{V_s} = 1}} |( {\bu}^n(t)\cdot\nabla \vartheta, H(\theta^n(t),\varphi^n(t)) )_\Omega|\\
    & \le \inf_{\substack{{\vartheta}\in V_s\\ \|\vartheta\|_{V_s} = 1}} \|{\bu}^n(t)\|\|\nabla\vartheta\|_{L^r}\|H(\theta^n(t),\varphi^n(t))\|_{L^{\frac{2r}{r-2}}}\\
    & \le  \|{\bu}^n(t)\|\|H(\theta^n(t),\varphi^n(t))\|^{\frac{r-4}{r}}\|H(\theta^n(t),\varphi^n(t))\|_{V}^{\frac{4}{r}}.
\end{aligned}
\end{align*}
Here, the last inequality is achieved by additionally employing $L^p$-interpolation with $\theta = (r-4)/r$, $p_1=4$ and $p_2=2$, and the embedding $H^1(\Omega)\hookrightarrow L^4(\Omega)$. We thus get 
\begin{align*}
    \begin{aligned}
        &\| P^n\mathcal{C}_1({\bu}^n, H(\theta^n,\varphi^n)) \|_{L^{\frac{r}{2}}(I;V_s^*)} \\& \le \left( \int_0^T \left( \|{\bu}^n(t)\| \|H(\theta^n(t),\varphi^n(t))\|^{\frac{r-4}{r}} \|H(\theta^n(t),\varphi^n(t))\|_{V}^{\frac{4}{r}} \right)^{\frac{r}{2}} \du t \right)^{2/r}\\
         & \le \|{\bu}^n\|_{L^\infty(I;H_\sigma)} \|H(\theta^n,\varphi^n)\|_{L^\infty(I;H)}^{\frac{r-4}{r}} \|H(\theta^n(t),\varphi^n(t))\|_{L^2(I;V)}^{\frac{4}{r}} \le M^2,
    \end{aligned}
\end{align*}
where the last inequality follows from \eqref{uniformbounds:uHV} and \eqref{estimate:Hthph}.

The second subcase is when $d=3$ and $\frac{3}{2}< p\le 2$, which gives us a particular value $r = \frac{3p}{2p-3}$. By additionally utilizing $L^p$ interpolation with $\theta = \frac{30-11p}{9p}$, $p_1=5$, and $p_2=2$ and the embedding $H^1(\Omega)\hookrightarrow L^5(\Omega)$ we get
\begin{align*}
\begin{aligned}
    \| P^n\mathcal{C}_1({\bu}^n(t),   H(\theta^n(t),\varphi^n(t))) \|_{V_s^*} 
    & = \inf_{\substack{{\vartheta}\in V_s\\ \|\vartheta\|_{V_s} = 1}} |\langle P^n\mathcal{C}_1({\bu}^n(t),H(\theta^n(t),\varphi^n(t))),\vartheta\rangle_{V_s}|\\
    & = \inf_{\substack{{\vartheta}\in V_s\\ \|\vartheta\|_{V_s} = 1}} |( {\bu}^n(t)\cdot\nabla \vartheta, H(\theta^n(t),\varphi^n(t)) )_\Omega|\\
    & \le \inf_{\substack{{\vartheta}\in V_s\\ \|\vartheta\|_{V_s} = 1}} \|{\bu}^n(t)\|\|\nabla\vartheta\|_{L^r}\|H(\theta^n(t),\varphi^n(t))\|_{L^{\frac{2r}{r-2}}}\\
    & \le  \|{\bu}^n(t)\|\|H(\theta^n(t),\varphi^n(t))\|^{\frac{30-11p}{9p}} \|H(\theta^n(t),\varphi^n(t))\|_{V}^{\frac{10}{9}\frac{2p-3}{p}} 
\end{aligned}
\end{align*}
We further estimate with respect to the time variable as follows:
\begin{align*}
\begin{aligned}
   & \| P^n\mathcal{C}_1({\bu}^n, H(\theta^n,\varphi^n)) \|_{L^{\tilde{p}}(I;V_s^*)}\\ & \le \left(\int_0^T \left( \|{\bu}^n(t)\| \|H(\theta^n(t),\varphi^n(t))\|^{\frac{30-11p}{9p}} \|H(\theta^n(t),\varphi^n(t))\|_{V}^{\frac{2}{\tilde{p}}}\right)^{{\tilde{p}}}\du t \right)^{\frac{1}{\tilde{p}}}\\
   & \le \|{\bu}^n\|_{L^\infty(I;H_\sigma)} \|H(\theta^n,\varphi^n)\|_{L^\infty(I;H_\sigma)}^{\frac{30-11p}{9p}} \|H(\theta^n,\varphi^n)\|_{L^2(I;V)}^{\frac{10}{9}\frac{2p-3}{p}} \le M^2,
\end{aligned}
\end{align*}
where $\tilde{p} = \frac{9p}{5(2p-3)}$.

Finally, the computations above give us the estimate for the temperature variable $\partial_t\theta^n$
\begin{align}
    \begin{aligned}
        \|\partial_t\theta^n\|_{ L^\infty(I;V_s^*)\cap L^{\frac{r}{2}}(I;V_{\frac{d+2}{2}}^*) } \le M+M^2
    \end{aligned}\label{estimate:timetheta1}
\end{align}
where $r>2$ and if any of the following cases hold:
\begin{itemize}
    \item $(s-2)2 \ge d$,
    \item $1<p<\frac{d}{d-1}$ with $\frac{d+4}{2} > s\ge \frac{(4-d)p+2d}{2p}$, or 
    \item $p = \frac{d}{d-1}$ with $\frac{d+4}{2} > s>\frac{(4-d)p+2d}{2p}$.
\end{itemize}
And if $d = 3$ and $\frac{3}{2}<p\le 2$ we have
\begin{align}
    \begin{aligned}
        \|\partial_t\theta^n\|_{ L^{\frac{9p}{5(2p-3)}}(I;V_s^*) } \le M + M^2.
    \end{aligned}\label{estimate:timetheta2}
\end{align}

The estimates \eqref{uniformbounds:phiH}--\eqref{uniformbounds:thetaHV}, \eqref{estimate:phiVV}, \eqref{boundedness:muV} and \eqref{estimate:rho} imply the existence of $\varphi\in L^\infty(I;H)\cap L^2(I;V)$, $\mu \in L^2(I;V)$, $\rho\in L^\infty(I;L^p(\Omega)) $, $\bu \in L^\infty(I;H_\sigma)\cap L^2(I;V_\sigma)$ and $\theta\in L^\infty(I;H)\cap L^2(I;V)$ for which --- up to a sebsequence --- the following properties hold
\begin{align}
    &\varphi^n \rightharpoonup \varphi &&\text{in }L^2(I;V)\label{conv:phiwl2}\\
    &\varphi^n \ws \varphi &&\text{in }L^\infty(I;H)\label{conv:vphi}\\
    &\mu^n \rightharpoonup \mu &&\text{in }L^2(I;V)\label{conv:mu}\\
    &\rho(\cdot,\varphi^n) \ws \rho &&\text{in }L^\infty(I;L^p(\Omega))\label{con:vrho}\\
    &\bu^n \rightharpoonup \bu &&\text{in }L^2(I;V_\sigma)\label{conv:ul2v}\\
    &\bu^n \ws \bu &&\text{in }L^\infty(I;H_\sigma)\\
    &\theta^n \rightharpoonup \theta &&\text{in }L^2(I;V)\\
    &\theta^n \ws \theta &&\text{in }L^\infty(I;H).\label{conv:thetaH}
\end{align}
Estimates \eqref{estimate:timeu}, \eqref{estimate:timephi1}, \eqref{estimate:timephi2}, \eqref{estimate:timetheta1} and \eqref{estimate:timetheta2}, further gives us the convergences of the time derivatives:
\begin{align}
    & \partial_t{\bu}^n \rightharpoonup \partial_t{\bu} \text{ in }L^{4/d}(I;V_\sigma^*),\quad \partial_t \varphi^n \rightharpoonup \partial_t\varphi \text{ and }  \partial_t \theta^n \rightharpoonup \partial_t\theta \text{ in }L^q(I;V^*_s)
\end{align}
for $q = \frac{9p}{5(2p-3)}$ and $s = \frac{p+6}{2p}$ if $d = 3$ and $\frac{3}{2}<p\le 2$, and for $q = \frac{r}{2}$ and $s = \frac{d+2}{2}$ --- with $r>2$ taken arbitrarily --- if $p = \frac{d}{d-1}$ and $d = 2,3$,
\begin{align}
    & \partial_t \varphi^n \ws \partial_t\varphi,\qquad  \partial_t \theta^n \ws \partial_t\theta &&\text{in }L^\infty(I;V^*_s)
\end{align}
for either $(s-2)2\ge d$, $1<p<\frac{d}{d-1}$ with $\frac{d+4}{2}>s\ge\frac{(4-d)p+2d}{2p}$, or $p=\frac{d}{d-1}$ with $\frac{d+4}{2}>s>\frac{(4-d)p+2d}{2p}$.

Taking advantage of Aubin--Lions--Simon embedding theorems would give us the compact embedding $W^{2,4/d}(V_\sigma,V_\sigma^*)\hookrightarrow L^2(I;H_\sigma)$. Similarly, the embedding $W^{2,q}(V,V_s^*)\hookrightarrow L^2(I;H)$ holds for the following cases
\begin{itemize}
    \item $q = \frac{9p}{5(2p-3)}$ and $s = \frac{p+6}{2p}$ if $d = 3$ and $\frac{3}{2}<p\le 2$;
    \item $q = \frac{r}{2}$, $r>2$ taken arbitrarily, and $s = \frac{d+2}{2}$ if $p = \frac{d}{d-1}$ $d = 2,3$; and
    \item $q = +\infty$ if either $(s-2)2\ge d$; $1<p<\frac{d}{d-1}$ with $\frac{d+4}{2}>s\ge\frac{(4-d)p+2d}{2p}$; or $p=\frac{d}{d-1}$ with $\frac{d+4}{2}>s>\frac{(4-d)p+2d}{2p}$.
\end{itemize}
From these, we get the following strong convergences
\begin{align}
    &\bu^n \to \bu \text{ in }L^2(I;H_\sigma),\quad \varphi^n \to \varphi \text{ in }L^2(I;H),\quad \theta^n \to \theta \text{ in }L^2(I;H)
    \label{strongconvergence}
\end{align}
all of which converge a.e. in $\Omega\times(0,T)$.

The passage of the limit for the order parameter and the fluid velocity follows that of in \cite{colli2012}. Although such steps would be straightforward to establish, we highlight the crucial points for which we prove that the triple $[\varphi,{\bu},\theta]$ is a weak solution of the system in the sense of Definition \ref{definition:weak}. Firstly, we point out some direct consequences of \eqref{strongconvergence}. 
\begin{itemize}
    \item By definition of $\rho(\cdot,\varphi^n)$, the regularity of $F$, and the pointwise convergence of the order parameter, $\rho(\cdot,\varphi^n)\to a(\cdot)\varphi + F'(\varphi)$ a.e. in $\Omega\times(0,T)$, from which we also infer $\rho = a(\cdot)\varphi + F'(\varphi)$.
    \item The strong convergence of the order parameter in $L^2(I;H)$ implies $J\ast\varphi^n\to J\ast\varphi$ in $L^2(I;V)$. Such convergence and the definition of $\mu^n$ imply $\mu = \rho - J\ast\varphi+ \ell_c\theta$. Indeed, for any $\psi\in H^n$ and $\chi\in C_0^\infty(0,T)$, we get
    \begin{align*}
        \int_0^T (\mu^n, \psi)_\Omega\chi\du t & = \int_0^T (\rho(\cdot,\varphi^n)-J\ast\varphi^n + \ell_c\theta^n, \psi)_\Omega\chi\du t\\& \to \int_0^T (\rho-J\ast\varphi + \ell_c\theta, \psi)_\Omega\chi\du t,
    \end{align*}
     where we used the pointwise convergence of $\rho$, the strong convergence of convolution, the strong convergence of the temperature variable in $L^2(I;H)$, and the density of $\rm{span}\{\psi_j\}$ in $V$. We thus infer that $\mu = \rho-J\ast\varphi + \ell_c\theta$ from \eqref{conv:mu}, and that $\rho \in L^2(I;V)$.
\end{itemize}
Moving forward, we multiply \eqref{weakdisc:phi}, \eqref{weakdisc:u}, \eqref{weakdisc:theta} by $\chi,\mathfrak{w},\omega \in C_0^\infty(0,T)$, and integrate over the interval $(0,T)$ and take advantage of \eqref{conv:phiwl2}--\eqref{strongconvergence}. Since some of the terms in the resulting integral equation can be handled quite easily, we highlight only some of the parts which we deemed to be crucial. 
\begin{itemize}
    \item The term $(\nabla\rho(\cdot,\varphi^n),\nabla\psi)_\Omega$ is handled by passing the derivative to the test function, i.e. $-(\rho(\cdot,\varphi^n),\Delta\psi)$, and utilizing \eqref{con:vrho} with the knowledge that $\psi\in V_s$ and $H^{s-2}\hookrightarrow L^{p'}$.
    \item Due to the convergence $J\ast\varphi^n\to J\ast\varphi$ in $L^2(I;V)$ we also achieve convergence for the term $(\nabla J\ast\varphi^n,\nabla\psi)_{\Omega}$.
    \item The expression involving the transport term $({\bu}^n(t)\cdot\nabla\varphi^n(t),\psi)_\Omega$ can be established to converge to $({\bu}(t)\cdot\nabla\varphi(t),\psi)_\Omega$ by utilizing the convergences \eqref{conv:ul2v}, \eqref{conv:phiwl2} and \eqref{strongconvergence}. The same argument is used to show the convergence of the transport term occurring in the equation for the temperature, i.e., the term $({\bu}^n(t)\cdot\nabla(\theta^n(t)-\ell_h\varphi^n(t)),\vartheta)_\Omega$.
    \item The strong convergence of the order parameter in \eqref{strongconvergence}, the imposed properties of $\nu(\cdot)$ in (A2), and the dominated convergence theorem imply $\nu(\varphi^n)\to \nu(\varphi)$ in $L^p(I;L^p(\Omega))$ for any $1\le p <+\infty$. From the convergence of the viscosity term above and \eqref{conv:ul2v} we implore that $\nu(\varphi^n)\mathrm{D}{\bu}^n \rightharpoonup \nu(\varphi)\mathrm{D}{\bu}$ in $L^2(I;L^2(\Omega)^{d\times d})$. The weak and strong convergences of the velocity vector in $L^2(I;V_\sigma)$ and $L^2(I;H_\sigma)$, respectively, give us the convergence for the trilinear form in \eqref{weakdisc:u} \cite[Lemma III.3.2]{temam1977}.
\end{itemize}
The linear terms can be handled quite easily using the convergences we have in hand. For the terms concerning the time derivative, we pass the time derivatives to the functions $\chi,\mathfrak{w},\omega \in C_0^\infty(0,T)$ and use \eqref{strongconvergence}. From these we see that the triple $[\varphi,{\bu},\theta]$ satisfies \eqref{weak:phi}, \eqref{weak:u} and \eqref{weak:theta} by virtue of the density of $\rm{span}\{{\bv}_j\}$ and $\rm{span}\{\psi_j\}$ in $V_\sigma$ and $V$.

Now we suppose $\chi,\mathfrak{w},\omega \in C_\infty(0,T)$ with values equal to one at the initial time $t= 0$ and equal to zero at the terminal time $t=T$ and apply the same passage of the limit. Multiplying the same functions to \eqref{weak:phi}, \eqref{weak:u} and \eqref{weak:theta} and apply integration by parts eventually lead to 
    \begin{align*}
			&({\varphi}(0)-{\varphi}_0,{\psi})_{\Omega} = 0 \quad\forall {\psi}\in V,\\
			&({\bu}(0)-{\bu}_0,{\bv})_{\Omega} = 0 \quad\forall {\bv}\in V_\sigma,\\
			&({\theta}(0)-{\theta}_0,{\vartheta})_{\Omega} = 0 \quad\forall {\vartheta}\in V.
    \end{align*}
We also mention that due to the Aubin--Lions lemma we have the embeddings $W^{+\infty,4/d}(H_\sigma,V_\sigma^*)\hookrightarrow C(\overline{I};H_\sigma)$ and $W^{+\infty,q}(H,V_s^*)\hookrightarrow C(\overline{I};H)$. These validate the evaluation of the weak solutions at the initial time $t=0$, and shows the satisfaction of \eqref{initcon:phi}, \eqref{initcon:u} and \eqref{initcon:theta}. 

The improved regularity of $\rho \in L^2(I;V)$ lets us write \eqref{weak:phi} as
\begin{align*}
    \langle \partial_t\varphi(t),\psi \rangle_V + \langle \mathcal{C}_1({\bu}(t),\varphi(t)),\psi\rangle_V + \langle B\mu(t),\psi \rangle_V & = 0 &&\forall \psi\in V.
\end{align*}
The transport term can thus be estimated --- following analogous arguments as in \eqref{estimate:capillarityD3} and \eqref{estimate:capillarityD2} --- as
\begin{align*}
    \|\mathcal{C}_1({\bu},\varphi)\|_{L^{4/d}(I;V^*)} & \le M^2.
\end{align*}
This improves the regularity of the time derivative as well, i.e. $\partial_t\varphi\in L^{4/d}(I,V^*)$. The space upon which the time derivative of the temperature inherits the improvement gained by the order parameter as well, i.e. $\partial_t\theta\in L^{4/d}(I,V^*)$.

Another consequence of the embeddings above is that up to a subsequence we have $\varphi^n(t)\to \varphi(t)$ in $H$ and almost everywhere in $\Omega$. Consequently, by virtue of Fatou's lemma
\begin{align}
    \int_\Omega F(\varphi(t)) \du x \le \liminf_{n\to\infty} \int_\Omega F(\varphi^n(t)) \du x.\label{liminf:F}
\end{align}
We also have, from the weak lower semi-continuity of the nonlocal energy $\mathbb{E}_{nl}$ and the weak convergence of $\varphi^n$ to $\varphi$, 
\begin{align}
    \int_\Omega \mathbb{E}_{nl}(\varphi(t)) \du x \le \liminf_{n\to\infty} \int_\Omega \mathbb{E}_{nl}(\varphi^n(t)) \du x.\label{liminf:Enl}
\end{align}
We recall that $J\ast\varphi^n\to J\ast\varphi$ in $L^2(I;V)$ which implies $P^n(J\ast\varphi^n) \to J\ast\varphi$ in $L^2(I;V)$, and $\sqrt{\nu(\varphi^n)}\mathrm{D}{\bu}^n \rightharpoonup \sqrt{\nu(\varphi)}\mathrm{D}{\bu}$ in $L^2(I;L^2(\Omega)^{d\times d})$. Integrating \eqref{eqreconcile} over $[0,t]$, utilizing \eqref{liminf:F}, \eqref{liminf:Enl} and \eqref{conv:phiwl2}--\eqref{conv:thetaH}, and taking advantage of the weak lower semi-continuity of norms, we achieve the energy inequality \eqref{energyineq}.

\end{proof}

\section{Nonlocal-to-Local Convergence}

The purpose of the section is to show, under an appropriate choice of the kernel $J$, that solutions of the nonlocal system converges to its local version. Notably, we should be able to determine by which convergence we shall anchor on, we also recall that the only assumption we imposed for the kernel is (A1), so it is imperative that such assumption is not violated by the particular kinds of kernel we shall consider.

Suppose $\gamma\in (0,d-1)$, $\eta\in C^1([0,+\infty);[0,+\infty))$ such that the map $s\mapsto |\eta'(s)|s^{d-1-\gamma}$ is in $L^1(\mathbb{R}^+)$, and satisfies the renormalization 
\begin{align*}
    \int_0^{+\infty} \eta(s)s^{d+1-\gamma} \du s = \frac{2}{C_d}, \text{ where }C_d:= \int_{S^{d-1}} |\sigma\cdot e_1|^{2}\du \mathcal{H}^{d-1}(\sigma).
\end{align*}
We define the family of mollifiers $(\eta_\varepsilon)_{\varepsilon>0}$ as $\eta_\varepsilon(s) = \frac{1}{\varepsilon^d}\eta(s/\varepsilon)$ for $s\ge 0$, from which we define the kernel $J_\varepsilon:\mathbb{R}^d\to\mathbb{R}$ by $J_\varepsilon(x) = \frac{1}{\varepsilon^{2-\gamma}}\frac{\eta_\varepsilon(|x|)}{|x|^\gamma}$. Evidently, for any $\varepsilon>0$ $J_\varepsilon(x) = J_\varepsilon(-x)$ for all $x\in\Omega$ and $a_\varepsilon(x) = \int_\Omega J_\varepsilon(x-y)\du y \ge 0$ for almost every $x\in\Omega$. Meanwhile, from \cite[Lemma 3.1]{DAVOLI2021a} we infer that $J_\varepsilon\in W^{1,1}(\mathbb{R}^d)$.

For a given $\varepsilon>0$ we now define the corresponding nonlocal energy functional to each kernel $J_\varepsilon$ as
\begin{align*}
    \mathbb{E}_{nl}^\varepsilon(\varphi) := \frac{1}{2} \int_\Omega\int_\Omega J_\varepsilon(x-y)(\varphi(x)-\varphi(y))^2 \du y\du x.
\end{align*}
And for each $\epsilon>0$ and each kernel $J_\varepsilon$, Theorem \ref{th:existence} assures us of the existence of solutions of \eqref{system:nlCHOB} in the sense of Definition \ref{definition:weak} which we shall denote as $[\varphi_\varepsilon,{\bu}_\varepsilon,\theta_\varepsilon]$ together with the chemical potential $\mu_\varepsilon:= a_\varepsilon\varphi_\varepsilon - J_\varepsilon\ast\varphi_\varepsilon + F'(\varphi_\varepsilon) + \ell_c\theta_\varepsilon$. We also denote by $\mathbb{E}^\varepsilon$ the total energy of the system \eqref{system:nlCHOB}  which we recall to be written as in \eqref{totalenergynl} but now with the kernel $J_\varepsilon$.

Our goal is to establish that the solutions $[\varphi_\varepsilon,{\bu}_\varepsilon,\theta_\varepsilon]$ converge to a triple $[\widetilde{\varphi},\widetilde{\bu},\widetilde{\theta}]$ that solves the local version of system \eqref{system:nlCHOB} as $\varepsilon\to 0$. In particular, we shall show that $[\widetilde{\varphi},\widetilde{\bu},\widetilde{\theta}]$ satisfies --- in weak sense --- the Cahn--Hilliard--Bousinessq system similar to what was proposed in \cite{peralta2021}:
\begin{subequations} 
\begin{align}
  \partial_t\varphi + {\bu}\cdot\nabla\varphi = \Delta \mu,\quad  \mu = -\Delta\varphi + \eta F'(\varphi)+ \ell_c \theta, 
\end{align}   
\begin{align}
 \begin{aligned}
  \partial_t{\bu} + ({\bu}\cdot\nabla){\bu} {-}  \dive(\nu(\varphi)2\mathrm{D}{\bu}) + \nabla p = \mathcal{K}(\mu-\ell_c\theta)\nabla\varphi + \ell(\varphi,\theta){\bg} + {\bq},
  \end{aligned}
\end{align}
\begin{align}
 \partial_t\theta - \ell_h\partial_t\varphi + {\bu}\cdot\nabla(\theta - \ell_h\varphi) -\kappa \Delta\theta = {\bg}\cdot{\bu} + z.
\end{align}
\label{system:CHOB}
\end{subequations}
with the incompressibility condition $\dive{\bu} = 0$ in $Q$, the incorporated initial conditions $\varphi(0) = \varphi_0$, ${\bu}(0) = {\bu}_0$, and $\theta(0) = \theta_0$ in $\Omega$ and closed with the boundary conditions $\frac{\partial\mu}{\partial{\bn}} = 0$,  $ {\bu} = 0$, and $ \frac{\partial\theta}{\partial{\bn}}  = 0$ on $\Gamma$.
We mention that the total energy for the local system can be written as
\begin{align}
\begin{aligned}
& 2\widetilde{\mathbb{E}}(\varphi,{\bu},\theta):= \frac{1}{\ell_c}\int_\Omega \frac{1}{2}|\nabla\varphi(x)|^2+ F(\varphi(x))\du x   + \frac{1}{\mathcal{K}\ell_c}\int_\Omega |{\bu}(x)|^2 \du x + \frac{1}{\ell_h}\int_\Omega |\theta(x)|^2\du x,
\end{aligned}\label{totalenergy}
\end{align}
while the local energy is defined for any $\varphi\in V$ as
\begin{align*}
    \mathbb{E}_{l}(\varphi) := \frac{1}{2}\int_\Omega |\nabla\varphi(x)|^2\du x.
\end{align*}

Finally, the purpose of this section is to establish the following result.
\begin{theorem}
    Given $\varepsilon>0$ let $[\varphi_{\varepsilon,0},{\bu}_{\varepsilon,0},\theta_{\varepsilon,0}]\in H\times H_\sigma\times H$, and assume that there exists $m_\Omega\in(-1,1)$ such that $\widehat{\varphi_{\varepsilon,0}}=m_\Omega$ for any $\varepsilon>0$. Suppose that there exists $[\widetilde{\varphi}_{0},\widetilde{\bu}_{0},\widetilde{\theta}_{0}]\in V\times H_\sigma\times H$ such that the following convergences hold:
    \begin{align*}
        &\varphi_{\varepsilon,0}\to \widetilde\varphi_0 \text{ in }H, \quad{\bu}_{\varepsilon,0}\to \widetilde{\bu}_0 \text{ in }H_\sigma,\quad
        \theta_{\varepsilon,0}\to \widetilde\theta_0 \text{ in }H\\
        &\mathbb{E}^\varepsilon(\varphi_{\varepsilon,0},{\bu}_{\varepsilon,0},\theta_{\varepsilon,0}) \to \widetilde{\mathbb{E}}(\widetilde{\varphi}_{0},\widetilde{\bu}_{0},\widetilde{\theta}_{0}).
    \end{align*}
    If for any $\varepsilon>0$, $[\varphi_\varepsilon,{\bu}_\varepsilon,\theta_\varepsilon]$ solves \eqref{system:nlCHOB} in weak sense with initial conditions $[\varphi_{\varepsilon,0},{\bu}_{\varepsilon,0},\theta_{\varepsilon,0}]$, then as $\varepsilon\to0$
    \begin{align}
        &\varphi_\varepsilon \rightharpoonup \widetilde{\varphi}\text{ in }L^2(I;V)\label{asymp:phiV}\\
        &\varphi_\varepsilon \ws \widetilde{\varphi}\text{ in }L^\infty(I;H)\label{asymp:phiH}\\
        &\varphi_\varepsilon \to \widetilde{\varphi}\text{ in }L^2(I;H)\text{ and a.e. in }\Omega\times(0,T)\label{asymp:phistH}\\
        &{\bu}_\varepsilon \rightharpoonup \widetilde{\bu}\text{ in }L^2(I;V_\sigma)\label{asymp:uV}\\
        &{\bu}_\varepsilon \ws \widetilde{\bu}\text{ in }L^\infty(I;H_\sigma)\label{asymp:uH}\\
        &{\bu}_\varepsilon \to \widetilde{\bu}\text{ in }L^2(I;H_\sigma)\text{ and a.e. in }\Omega\times(0,T)\label{asymp:ustH}\\
        &\theta_\varepsilon \rightharpoonup \widetilde{\theta}\text{ in }L^2(I;V)\label{asymp:thetaV}\\
        &\theta_\varepsilon \ws \widetilde{\theta}\text{ in }L^\infty(I;H)\label{asymp:thetaH}\\
        &\theta_\varepsilon \to \widetilde{\theta}\text{ in }L^2(I;H)\text{ and a.e. in }\Omega\times(0,T)\label{asymp:thetastH}
    \end{align}
    where $[\widetilde{\varphi},\widetilde{\bu},\widetilde{\theta}]$ solves \eqref{system:CHOB} in weak sense with $[\widetilde{\varphi}(0),\widetilde{\bu}(0),\widetilde{\theta}(0)] = [\widetilde{\varphi}_{0},\widetilde{\bu}_{0},\widetilde{\theta}_{0}]$ and 
    \begin{align}
        &\widetilde{\varphi}\in W^{+\infty,4/d}(H;V^*)\cap L^2(I;V)\cap L^2(I;V_2) \\
        &\widetilde{\bu}\in W^{+\infty,4/d}(H_\sigma;V_\sigma^*)\cap L^2(I;V_\sigma)\\
        &\widetilde{\theta}\in W^{+\infty,4/d}(H;V^*)\cap L^2(I;V)
    \end{align}
    where $\widetilde{\mu} = -\Delta\widetilde{\varphi} + F'(\widetilde{\varphi}) + \ell_c\widetilde{\theta}\in L^2(I;H)$. Furthermore, for any $t\in[0,T]$ the following energy inequality holds
    \begin{align}
	\begin{aligned}
		&\widetilde{\mathbb{E}}(\widetilde{\varphi}(t),\widetilde{\bu}(t),\widetilde{\theta}(t)) + \int_0^t \mathbb{D}(\widetilde{\varphi}(s),\widetilde{\mu}(s),\widetilde{\bu}(s),\widetilde{\theta}(s)) \du s\\ &
        \le \widetilde{\mathbb{E}}(\widetilde{\varphi}_0,\widetilde{\bu}_0,\widetilde{\theta}_0)+ \int_0^t \big\{\langle{\bq}(s),\widetilde{\bu}(s)\rangle_{V_\sigma} + (\ell(\widetilde{\varphi}(s),\widetilde{\theta}(s)){\bg},\widetilde{\bu}(s))_\Omega\\ &\ \  + \langle{z}(s),\widetilde{\theta}(s)\rangle_{V} + ({\bg}\cdot\widetilde{\bu}(s),\widetilde{\theta}(s))_\Omega \big\}\du s  .
	\end{aligned}\label{energyineql}
	\end{align}
 \label{th:asymptotics}
\end{theorem}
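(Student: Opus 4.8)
The plan is to obtain $\varepsilon$-uniform a priori bounds for $[\varphi_\varepsilon,\bu_\varepsilon,\theta_\varepsilon]$, pass to the limit along a subsequence, and finally identify the nonlocal energy limit with the local (gradient) energy using the Bourgain--Brezis--Mironescu / Ponce $\Gamma$-convergence machinery. First I would revisit the energy inequality \eqref{energyineq} for each $\varepsilon$: thanks to the renormalization of $\eta$ and \cite[Lemma 3.1]{DAVOLI2021a}, one has $\|J_\varepsilon\|_{L^1(\mathbb{R}^d)}$ bounded uniformly in $\varepsilon$ (indeed converging to the constant encoded in $C_d$), so the constants $\alpha$, $c_1$, $c_2$ appearing in (A4) and in \eqref{phicomponentE} can be chosen independently of $\varepsilon$; together with the convergence of the initial energies $\mathbb{E}^\varepsilon(\varphi_{\varepsilon,0},\ldots)\to\widetilde{\mathbb{E}}(\widetilde\varphi_0,\ldots)$ and the convergence of the initial data in $H\times H_\sigma\times H$, Gr\"onwall then yields the $\varepsilon$-independent bounds
\begin{align*}
	&\|\varphi_\varepsilon\|_{L^\infty(I;H)} + \|\mu_\varepsilon\|_{L^2(I;V)} + \|\bu_\varepsilon\|_{L^\infty(I;H_\sigma)\cap L^2(I;V_\sigma)} + \|\theta_\varepsilon\|_{L^\infty(I;H)\cap L^2(I;V)} \le M,\\
	&\sup_{\varepsilon>0}\Big(\esssup_{t\in I}\mathbb{E}_{nl}^\varepsilon(\varphi_\varepsilon(t)) + \|F(\varphi_\varepsilon)\|_{L^\infty(I;L^1(\Omega))}\Big) \le M.
\end{align*}
The crucial new ingredient relative to Theorem \ref{th:existence} is that the uniform bound on the nonlocal Dirichlet energy $\mathbb{E}_{nl}^\varepsilon(\varphi_\varepsilon(t))$ must be upgraded, via the Bourgain--Brezis--Mironescu characterization \cite{bourgain2001,bourgain2002} (as used in \cite{DAVOLI2021a}), into a uniform bound on $\|\varphi_\varepsilon\|_{L^2(I;V)}$; this replaces the estimate \eqref{estimate:phiVV}, which in Theorem \ref{th:existence} relied on (A3) and on a fixed kernel. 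From this the time-derivative bounds of the spectral scheme carry over verbatim, giving $\partial_t\varphi_\varepsilon,\partial_t\theta_\varepsilon$ bounded in $L^{4/d}(I;V_s^*)$ and $\partial_t\bu_\varepsilon$ in $L^{4/d}(I;V_\sigma^*)$, uniformly in $\varepsilon$.

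Next I would extract a subsequence realizing the weak-$*$ and weak convergences \eqref{asymp:phiV}--\eqref{asymp:phiH}, \eqref{asymp:uV}--\eqref{asymp:uH}, \eqref{asymp:thetaV}--\eqref{asymp:thetaH}, and then invoke Aubin--Lions--Simon exactly as in the proof of Theorem \ref{th:existence} to get the strong convergences \eqref{asymp:phistH}, \eqref{asymp:ustH}, \eqref{asymp:thetastH} together with a.e.\ convergence in $Q$. The passage to the limit in the weak formulations \eqref{weak:u}, \eqref{weak:theta}, and in the transport and $F'$-terms of \eqref{weak:phi}, is then identical to the corresponding steps in Theorem \ref{th:existence}: the a.e.\ convergence and (A2), (A5) handle $\nu(\varphi_\varepsilon)$, $F'(\varphi_\varepsilon)$, $\ell(\varphi_\varepsilon,\theta_\varepsilon)$ by dominated/Vitali convergence, while the weak--strong couple $(\bu_\varepsilon\rightharpoonup\widetilde\bu$ in $L^2(I;V_\sigma)$, $\bu_\varepsilon\to\widetilde\bu$ in $L^2(I;H_\sigma))$ closes the quadratic terms \cite[Lemma III.3.2]{temam1977}. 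The genuinely new work is the nonlocal-to-local term $a_\varepsilon\varphi_\varepsilon - J_\varepsilon\ast\varphi_\varepsilon$: writing, for test functions $\psi\in V_2$,
\begin{align*}
	(\nabla(a_\varepsilon\varphi_\varepsilon - J_\varepsilon\ast\varphi_\varepsilon),\nabla\psi)_\Omega = \frac14\int_\Omega\int_\Omega J_\varepsilon(x-y)\big((\varphi_\varepsilon(x)-\varphi_\varepsilon(y))\big)\big((\nabla\psi(x)-\nabla\psi(y))\cdot\text{(difference quotients)}\big),
\end{align*}
I would show, following \cite[Sec.\ 4--5]{DAVOLI2021a} and the commutator estimates of Ponce \cite{ponce2003,ponce2004}, that this quantity converges to $(\nabla\widetilde\varphi,\nabla\psi)_\Omega$; combined with the already-identified limit $F'(\varphi_\varepsilon)\to F'(\widetilde\varphi)$ and $\theta_\varepsilon\to\widetilde\theta$, this gives $\mu_\varepsilon\rightharpoonup\widetilde\mu$ with $\widetilde\mu = -\Delta\widetilde\varphi + F'(\widetilde\varphi) + \ell_c\widetilde\theta$, and the uniform $L^2(I;V)$ bound on $\mu_\varepsilon$ together with elliptic regularity for the Neumann problem upgrades $\widetilde\varphi$ to $L^2(I;V_2)$ and $\widetilde\mu$ to $L^2(I;H)$. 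The initial conditions are recovered as in Theorem \ref{th:existence} via the continuous embeddings $W^{+\infty,4/d}(H;V^*)\hookrightarrow C(\overline I;H)$ etc.

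The main obstacle is the passage to the limit in the nonlocal energy and in the nonlocal gradient term: one needs not just $\liminf_{\varepsilon\to0}\mathbb{E}_{nl}^\varepsilon(\varphi_\varepsilon(t))\ge \mathbb{E}_l(\widetilde\varphi(t))$ (a $\Gamma$-liminf / lower-semicontinuity statement, which follows from \cite{ponce2004} combined with the a.e.\ and weak convergences, and which feeds the energy inequality \eqref{energyineql}), but also an exact identification of the limit of the bilinear nonlocal form against smooth test functions, which is the delicate commutator computation; care is required because the $\varepsilon$-uniform control of $\varphi_\varepsilon$ in $V$ is itself a consequence of, not an input to, the BBM estimates, so the argument must be organized so that the energy bound is established first and the $L^2(I;V)$-bound second. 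Once these are in place, I would integrate the reconciled energy identity \eqref{eqreconcile} (now written for the $\varepsilon$-system) over $[0,t]$, use the convergence of initial energies, the strong convergences of $\bu_\varepsilon$ and $\theta_\varepsilon$, weak lower semicontinuity of the $L^2$-norms of $\nabla\widetilde\mu$, $\sqrt{\nu(\widetilde\varphi)}\mathrm D\widetilde\bu$, $\nabla\widetilde\theta$, and the $\Gamma$-liminf inequality for $\mathbb{E}_{nl}^\varepsilon$ plus Fatou for $\int_\Omega F(\varphi_\varepsilon(t))$, to arrive at \eqref{energyineql}; a short separate argument using $\widehat{\varphi_{\varepsilon,0}}=m_\Omega$ and mass conservation $\widehat{\varphi_\varepsilon(t)}=m_\Omega$ guarantees that the limiting mean is $m_\Omega\in(-1,1)$, which (with (A4)) keeps $\widetilde\varphi$ in the right energy class.
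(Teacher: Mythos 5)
Your overall skeleton (uniform energy bounds, compactness, limit passage, $\Gamma$-liminf for the energy inequality) matches the paper's, but two of your central mechanisms do not work as stated. First, the Bourgain--Brezis--Mironescu/Ponce machinery does \emph{not} upgrade the uniform bound on $\mathbb{E}^\varepsilon_{nl}(\varphi_\varepsilon(t))$ into a uniform bound on $\|\varphi_\varepsilon\|_{L^2(I;V)}$: what it yields is relative compactness of $\{\varphi_\varepsilon\}$ in $L^2(\Omega)$, membership of any \emph{limit point} in $H^1(\Omega)$, and the liminf inequality \eqref{gamma:energy}; the approximants themselves carry no $\varepsilon$-uniform $H^1$ control (each $\mathbb{E}^\varepsilon_{nl}$ is finite on all of $H$). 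Since you then feed this fictitious bound ``verbatim'' into the time-derivative estimates --- which in the existence proof used $\|\varphi^n\|_{L^2(I;V)}$ and $\|\nabla J\|_{L^1}$ --- the compactness step collapses. Relatedly, your premise that $\|J_\varepsilon\|_{L^1}$ is uniformly bounded is false: the renormalization fixes the \emph{second moment} $\int J_\varepsilon(z)|z|^2\du z$, while $\|J_\varepsilon\|_{L^1}$ scales like $\varepsilon^{-2}$, so the constants in (A4) and \eqref{phicomponentE} cannot simply be ``chosen independently of $\varepsilon$'' in the way you assert.

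Second, and more importantly, your identification of the limit of $a_\varepsilon\varphi_\varepsilon - J_\varepsilon\ast\varphi_\varepsilon$ by a direct ``commutator computation'' on the nonlocal bilinear form is not viable with the information available: it would require strong convergence of the nonlocal difference quotients of $\varphi_\varepsilon$, which is exactly what is missing. The paper circumvents this with a convexity (Minty-type) argument. It first derives an $\varepsilon$-uniform $L^2(I;H)$ bound on $F'(\varphi_\varepsilon)$ by pairing $\mu_\varepsilon$ with $G'(\varphi_\varepsilon)$ and exploiting the convexity of $G$ --- a step you omit, and which is needed both to place the weak limit $\bg$ of $a_\varepsilon\varphi_\varepsilon - J_\varepsilon\ast\varphi_\varepsilon$ in $L^2(I;H)$ and to conclude $\widetilde{\mu}\in L^2(I;H)$ and $\widetilde{\varphi}\in L^2(I;V_2)$. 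It then uses the nonnegativity of $D^2\mathbb{E}^\varepsilon_{nl}$ and a Taylor expansion to obtain the subdifferential inequality
\begin{align*}
\int_0^T\mathbb{E}^\varepsilon_{nl}(\psi(t))\du t\ \ge\ \int_0^T\Big\{\mathbb{E}^\varepsilon_{nl}(\varphi_\varepsilon(t))+\big(a_\varepsilon\varphi_\varepsilon(t)-J_\varepsilon\ast\varphi_\varepsilon(t),\,\psi(t)-\varphi_\varepsilon(t)\big)_\Omega\Big\}\du t,
\end{align*}
and passes to the limit using only \eqref{conv:nontoloc} at the fixed test function and \eqref{gamma:energy} along $\varphi_\varepsilon$, concluding $\bg\in\partial\mathbb{E}_l(\widetilde{\varphi})$, i.e.\ $\bg=-\Delta\widetilde{\varphi}$ with the Neumann condition; this is also where the hypothesis $\widehat{\varphi_{\varepsilon,0}}=m_\Omega$ is actually used (test functions of the same mean). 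You should replace your commutator step with this convexity argument, or supply substantially new estimates to justify the direct identification.
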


By saying $[\widetilde{\varphi},\widetilde{\bu},\widetilde{\theta}]$ solves \eqref{system:CHOB} in weak sense, we mean it to satisfy the variational problem 
\begin{subequations}
\begin{align}
    \begin{aligned}
        \langle \partial_t\widetilde{\varphi}(t),\psi \rangle_V + &\, (\widetilde{\bu}(t)\cdot\nabla\widetilde{\varphi}(t),\psi)_\Omega + (\nabla\widetilde{\mu}(t),\nabla\psi)_\Omega = 0
    \end{aligned}\label{weak:locphi}
\end{align}
\begin{align}
    \begin{aligned}
        &\langle \partial_t\widetilde{\bu}(t),{\bv} \rangle_{V_\sigma} + ((\widetilde{\bu}(t)\cdot\nabla)\widetilde{\bu}(t),{\bv} )_\Omega + 2(\nu(\varphi)\mathrm{D}\widetilde{\bu}(t),\mathrm{D}{\bv})\\ & = \mathcal{K}(\bv\cdot\nabla\widetilde{\varphi}(t), (\widetilde{\mu}(t) - \ell_c\widetilde{\theta}(t)) )_\Omega + (\ell(\widetilde{\varphi}(t),\widetilde{\theta}(t)){\bg},{\bv} )_\Omega + \langle {\bq}(t),{\bv} \rangle_{V_\sigma}
    \end{aligned}\label{weak:locu}
\end{align}
\begin{align}
    \begin{aligned}
        & \langle \partial_t\widetilde{\theta}(t),\vartheta  \rangle_V -  \ell_h\langle \partial_t\widetilde{\varphi}(t),\vartheta \rangle_V  + \kappa(\nabla\widetilde{\theta}(t),\nabla\vartheta)\\ & = (\widetilde{\bu}(t)\cdot\nabla(\ell_h\widetilde{\varphi}(t)-\widetilde{\theta}(t)),\vartheta)_\Omega + ({\bg}\cdot\widetilde{\bu}(t),\vartheta)_\Omega + \langle z(t),\vartheta\rangle_V
    \end{aligned} \label{weak:loctheta}
\end{align}
\end{subequations}
for all $\psi\in V$, ${\bv}\in V_\sigma$, $\vartheta\in V$ and almost every $t\in (0,T)$.

 To proceed with the  proof of the theorem above we shall need the following Lemma which was proven in \cite[Lemma 3.3]{DAVOLI2021a}.
\begin{lemma}
    If $\varphi_1,\varphi_2\in V$ then 
        \begin{align}
            &\lim_{\varepsilon\to 0} \mathbb{E}^\varepsilon_{nl}(\varphi_1) = \mathbb{E}_{l}(\varphi_1),\label{conv:nontoloc}\\
            &\lim_{\varepsilon\to 0}\int_\Omega (a_\varepsilon(x)\varphi_1(x) - J_\varepsilon\ast\varphi_1 (x) )\varphi_2(x)\du x = \int_\Omega \nabla\varphi_1(x)\cdot\nabla\varphi_2(x)\du x.
        \end{align}
    Furthermore, if $\{\varphi_\varepsilon\}\subset H$ is a sequence that converges strongly to $\varphi\in H$ in $H$ then 
    \begin{align}
        \mathbb{E}_{l}(\varphi) \le \liminf_{\varepsilon\to 0} \mathbb{E}^\varepsilon_{nl}(\varphi_\varepsilon).\label{gamma:energy}
    \end{align}
    \label{lemma:davoli}
\end{lemma}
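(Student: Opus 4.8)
The three assertions all flow from the Bourgain--Brezis--Mironescu (BBM) and Ponce framework once the kernels $J_\varepsilon$ are recast as a sequence of radial mollifiers. The plan is first to set up this reduction, then to dispatch \eqref{conv:nontoloc} and the second (bilinear) limit together, since one is the diagonal of the other, and finally to treat the $\Gamma$-$\liminf$ bound \eqref{gamma:energy}, which is the genuinely delicate point.

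First I would set $\rho_\varepsilon(z):=|z|^2 J_\varepsilon(z)=\varepsilon^{-(d+2-\gamma)}|z|^{2-\gamma}\eta(|z|/\varepsilon)$, so that $\mathbb{E}^\varepsilon_{nl}(\varphi)=\tfrac12\int_\Omega\int_\Omega\frac{(\varphi(x)-\varphi(y))^2}{|x-y|^2}\rho_\varepsilon(x-y)\,\du y\,\du x$ acquires exactly the BBM form. I would then verify that $(\rho_\varepsilon)$ is an admissible family of radial mollifiers: nonnegativity and radiality are immediate from $\eta\ge 0$; in polar coordinates, with $\omega_{d-1}=\mathcal{H}^{d-1}(S^{d-1})$, the substitution $s=r/\varepsilon$ gives $\int_{\mathbb{R}^d}\rho_\varepsilon=\omega_{d-1}\int_0^\infty s^{d+1-\gamma}\eta(s)\,\du s$, which is finite and $\varepsilon$-independent by the renormalization hypothesis; and the same substitution yields $\int_{|z|>\delta}\rho_\varepsilon=\omega_{d-1}\int_{\delta/\varepsilon}^\infty s^{d+1-\gamma}\eta(s)\,\du s\to0$, i.e. the mass concentrates at the origin. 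The structural assumptions on $\eta$ (that $s\mapsto|\eta'(s)|s^{d-1-\gamma}\in L^1$ and the finiteness of the renormalization integral) are precisely what guarantees this concentration and, via \cite[Lemma 3.1]{DAVOLI2021a}, that $J_\varepsilon\in W^{1,1}$ so that assumption (A1) is never violated.

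For \eqref{conv:nontoloc} and the second limit I would observe that, by the evenness of $J_\varepsilon$ and the definition $a_\varepsilon(x)=\int_\Omega J_\varepsilon(x-y)\,\du y$, symmetrizing in $x\leftrightarrow y$ yields the polarization identity $\int_\Omega(a_\varepsilon\varphi_1-J_\varepsilon\ast\varphi_1)\varphi_2\,\du x=\tfrac12\int_\Omega\int_\Omega J_\varepsilon(x-y)(\varphi_1(x)-\varphi_1(y))(\varphi_2(x)-\varphi_2(y))\,\du y\,\du x$, whose diagonal case $\varphi_1=\varphi_2$ is exactly $\mathbb{E}^\varepsilon_{nl}(\varphi_1)$; thus both statements are the same BBM limit and need only be proved once. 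I would establish it first for $\varphi_1,\varphi_2\in C^2(\overline\Omega)$ via the change of variables $z=x-y=\varepsilon\zeta$, the Taylor expansion $\varphi(x)-\varphi(x-\varepsilon\zeta)=\varepsilon\nabla\varphi(x)\cdot\zeta+O(\varepsilon^2)$, and dominated convergence (concentration lets the boundary layer and the $O(\varepsilon^2)$ remainder be discarded), reducing the pointwise integrand to $\int_{\mathbb{R}^d}\frac{\eta(|\zeta|)}{|\zeta|^\gamma}(\nabla\varphi_1(x)\cdot\zeta)(\nabla\varphi_2(x)\cdot\zeta)\,\du\zeta$. Separating radial and spherical parts and using $\int_{S^{d-1}}\sigma_i\sigma_j\,\du\mathcal{H}^{d-1}(\sigma)=C_d\delta_{ij}$ together with the renormalization fixes the limiting constant and produces the stated right-hand sides. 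The passage to general $\varphi_1,\varphi_2\in V$ is by density, justified by the uniform estimate $\mathbb{E}^\varepsilon_{nl}(\varphi)\le c\|\nabla\varphi\|^2$ (independent of $\varepsilon$), obtained from $\|\tau_h\varphi-\varphi\|\le|h|\,\|\nabla\varphi\|$ and $\int_{\mathbb{R}^d}J_\varepsilon(h)|h|^2\,\du h=\int_{\mathbb{R}^d}\rho_\varepsilon<\infty$: equiboundedness of the bilinear forms on $V\times V$ plus convergence on the dense subset $C^2(\overline\Omega)$ gives convergence on all of $V$.

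The main obstacle is the liminf inequality \eqref{gamma:energy}, where $\varphi_\varepsilon$ varies and is only assumed to converge strongly in $H$, so the fixed-function argument above fails. I would argue as follows. If $\liminf_{\varepsilon\to0}\mathbb{E}^\varepsilon_{nl}(\varphi_\varepsilon)=+\infty$ there is nothing to prove, so I pass to a subsequence realizing the liminf with $\sup_\varepsilon\mathbb{E}^\varepsilon_{nl}(\varphi_\varepsilon)=:C<\infty$. Writing the energy in sliced form $y=x+r\sigma$, namely $\mathbb{E}^\varepsilon_{nl}(\varphi_\varepsilon)=\tfrac12\int_{S^{d-1}}\int_0^\infty\frac{\rho_\varepsilon(r)}{r^2}\,\|\tau_{r\sigma}\varphi_\varepsilon-\varphi_\varepsilon\|_{L^2(\Omega\cap(\Omega-r\sigma))}^2\,r^{d-1}\,\du r\,\du\mathcal{H}^{d-1}(\sigma)$, the uniform bound together with the BBM compactness criterion first forces $\varphi\in V$ with quantitative control of $\|\nabla\varphi\|$. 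The lower bound is then obtained by combining the strong convergence $\varphi_\varepsilon\to\varphi$ in $H$ with Fatou's lemma in the radial/spherical integration and the one-dimensional BBM lower bound applied on almost every line, which controls the rescaled difference quotients from below by the directional derivative $(\nabla\varphi\cdot\sigma)^2$ in the averaged sense; integrating over $S^{d-1}$ and invoking the renormalization recovers $\mathbb{E}_l(\varphi)$. This is precisely the $\Gamma$-$\liminf$ half of the $\Gamma$-convergence of Ponce \cite{ponce2004}, adapted to the present kernels in \cite[Lemma 3.3]{DAVOLI2021a}. The crux, which I expect to be the hardest step, is interchanging the $\varepsilon\to0$ limit with the merely strong (not $V$-bounded) convergence of $\varphi_\varepsilon$ inside the concentrating singular kernel; it is here that the genuine $\Gamma$-convergence machinery, rather than the soft computation used for \eqref{conv:nontoloc}, is indispensable.
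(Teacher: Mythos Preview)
The paper does not actually prove this lemma: it is stated without proof and attributed verbatim to \cite[Lemma~3.3]{DAVOLI2021a}. Your proposal therefore goes well beyond what the paper itself offers --- you have reconstructed, correctly and in reasonable detail, the BBM/Ponce argument that underlies the cited result. Your identification of $\rho_\varepsilon(z)=|z|^2 J_\varepsilon(z)$ as the relevant radial mollifier family, the polarization identity linking the bilinear form to the diagonal energy, the Taylor-expansion/density argument for the pointwise limits (with the uniform bound $\mathbb{E}^\varepsilon_{nl}(\varphi)\le c\|\nabla\varphi\|^2$ justifying passage from $C^2(\overline\Omega)$ to $V$), and the appeal to Ponce's $\Gamma$-liminf for \eqref{gamma:energy} are precisely the ingredients of the proof in \cite{DAVOLI2021a}, which in turn rests on \cite{bourgain2001,ponce2004}. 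The normalization bookkeeping you carry out also lands on the correct constant. In short: your sketch is sound and coincides with the argument in the reference the paper cites, rather than with anything the present paper proves on its own.
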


We mention that due to the computations in the previous section, the proof of the main theorem will be simplified. Mainly because the arguments for establishing the uniform boundedness of the discretized solutions can be done analogously for the solutions of the nonlocal system with the newly specified convolution kernels.
We are now in the position to prove the main result of this section. 
\begin{proof}[Proof of Theorem \ref{th:asymptotics}]
Since every solution $[\varphi_\varepsilon,{\bu}_\varepsilon,\theta_\varepsilon]$ satisfies the energy inequality \eqref{energyineq}, following the same arguments to be able to reach \eqref{estimate:completeN} we get
\begin{align*}
    \begin{aligned}
        &\sup_{t\in [0,T]}\widehat{\mathbb{E}}(\varphi_\varepsilon(t),{\bu}_\varepsilon(t),\theta_\varepsilon(t))+ \int_0^{T} \widehat{\mathbb{D}}(\mu_\varepsilon(s),{\bu}_\varepsilon(s),\theta_\varepsilon(s))\du s \le M,
    \end{aligned}
\end{align*}
for some constant $M>0$ independent of $\varepsilon>0$. This provides uniform boundedness of $\{\varphi_\varepsilon \}$ in $L^\infty(I;H)$ and $L^\infty(I;H)$, $\{{\bu}_\varepsilon\}$ in $L^\infty(I;H_\sigma)\cap L^2(I;V_\sigma)$ and $\{\nabla\mu_\varepsilon\}$ in $L^2(I;L^2(\Omega)^d)$. Testing \eqref{weak:theta} with $\vartheta = 1$ and integrating over $(0,T)$ we get
\begin{align*}
    \left| \int_\Omega \theta_\varepsilon \du x \right| \le c\left( \|{\bu_\varepsilon}\|_{L^\infty(I;H_\sigma)} + \|z\|_{L^2(I;V^*)} \right),
\end{align*}
and thus the uniform boundedness of $\{\theta_\varepsilon \}$ in $L^2(I;V)$.

Following the same steps as in the proof of Theorem \ref{th:existence}, we can show that \eqref{energy:incomplete} holds but with the superscript $n$ replaced with the subscript $\varepsilon$ and the superscripts on the external force ${\bq}^n$ and external heat source $z^n$ removed. Taking the integral over $(0,T)$ on such estimate and using the assumption on convergence of the nonlocal total energy to the local total energy evaluated at the initial data shows that $\{F(\varphi_\varepsilon)\}$ is uniformly bounded in $L^\infty(I;L^1(\Omega))$. From assumption (A5), we thus get
\begin{align*}
    \begin{aligned}
        \left|\int_\Omega \mu_\varepsilon \du x \right| & = \left|\int_\Omega a_\varepsilon\varphi_\varepsilon - J\ast\varphi_\varepsilon + F'(\varphi_\varepsilon) + \ell_c\theta_\varepsilon \du x \right|\\
       & \le c_3 \int_\Omega |F(\varphi_\varepsilon)| \du x + c_4|\Omega| + \ell_c\sqrt{|\Omega|}\|\theta_\varepsilon\|.
    \end{aligned}
\end{align*}
    which implies uniform boundedness of $\{ \mu_\varepsilon\}$ in $L^2(I;V)$.

    From these, we establish the existence of $[\widetilde{\varphi},\widetilde{\bu},\widetilde{\theta}]$ and $\widetilde{\mu}$ such that \eqref{asymp:phiV}, \eqref{asymp:phiH}, \eqref{asymp:uV}, \eqref{asymp:uH}, \eqref{asymp:thetaV}, \eqref{asymp:thetaH} and $\mu_\varepsilon\rightharpoonup\widetilde{\mu}$ in $L^2(I;V)$. Following analogous arguments as in the well-posedness of the nonlocal system, we derive uniform boundedness for the time derivatives of the variables $[\varphi_\varepsilon,{\bu}_\varepsilon,\theta_\varepsilon]$, i.e.
    \begin{align}
        \|\partial_t\varphi_\varepsilon\|_{L^{4/d}(I;V^*)} + \|\partial_t{\bu}_\varepsilon\|_{L^{4/d}(I;V_\sigma^*)} + \|\partial_t\theta_\varepsilon\|_{L^{4/d}(I;V^*)} \le M.
    \end{align}
   Due to Aubin-Lions-Simon we infer that $W^{+\infty,4/d}(H_\sigma,V^*_\sigma)\hookrightarrow C(\overline{I};H_\sigma)$, $W^{+\infty,4/d}(H,V^*)\hookrightarrow C(\overline{I};H)$, $W^{2,4/d}(V_\sigma,V^*_\sigma)\hookrightarrow L^2({I};H_\sigma)$ and $W^{2,4/d}(V,V^*)\hookrightarrow L^2({I};H)$ are compact which give us the strong and a.e. convergences \eqref{asymp:phistH}, \eqref{asymp:ustH} and \eqref{asymp:thetastH}. The convergences we have in hand, by passing through the limits, are enough to establish that $[\widetilde{\varphi},\widetilde{\bu},\widetilde{\theta}]$ satisfies \eqref{weak:locphi}, \eqref{weak:locu} and \eqref{weak:loctheta}.
 
    Multiplying $G'(\varphi_\varepsilon)$ and $\mu_\varepsilon$, using the second equation in \eqref{orderundpotential}, integrating over $\Omega$, and rearranging the resulting integral, one gets by virtue of H{\"o}lder and Young inequalities
    \begin{align*}
    \begin{aligned}
        &\int_\Omega |G'(\varphi_\varepsilon(x))|^2\du x + \frac{1}{2}\int_\Omega\int_\Omega J_\varepsilon(x-y)\delta G'_\varphi(x,y)(\varphi_\varepsilon(x) - \varphi_\varepsilon(y)) \du x\du y\\
        &=  \int_\Omega ( \mu_\varepsilon(x) + a^*\varphi_\varepsilon(x) - \ell_c\theta_\varepsilon(x) ) G'(\varphi_\varepsilon(x))\du x \le M + \frac{1}{2}\int_\Omega |G'(\varphi_\varepsilon(x))|^2\du x,
    \end{aligned}
    \end{align*}
    where $M>0$ is a constant independent of $\varepsilon>0$ and $\delta G'_\varphi(x,y) := (G'(\varphi_\varepsilon(x)) - G'(\varphi_\varepsilon(y)))$. Since $G$ is a strictly convex function, we see the the second term on the left-hand side above is nonnegative, and thus the uniform boundedness of   $\{F'(\varphi_\varepsilon)\}$ in  $ L^2(I;H)$.
    This implies the existence of ${\blf}\in L^2(I;H)$ such the $F'(\varphi_\varepsilon)\rightharpoonup {\blf}$ in $L^2(I;H)$. Furthermore, by virtue of Egorov's theorem (see footnote in \cite[p. 1093]{abels2015}), the strong convergence \eqref{asymp:phistH} implies $F'(\varphi_\varepsilon)\to F'(\widetilde{\varphi})$ a.e., and hence in $L^2(I;L^2(\Omega))$. On the other hand, since $\mu_\varepsilon$ and $\theta_\varepsilon$ are bounded in $L^2(I;H)$, with independence on $\varepsilon>0$, we get uniform boundedness of $a_\varepsilon(x)\varphi_\varepsilon - J_\varepsilon\ast\varphi_\varepsilon$ in $L^2(I;H)$ which implies the existence of ${\bg}\in L^2(I;H)$ such that $a_\varepsilon(x)\varphi_\varepsilon - J_\varepsilon\ast\varphi_\varepsilon \rightharpoonup {\bg}$ in $L^2(I;H)$.

    Referring to \cite[Lemma 3.2]{DAVOLI2021a} $\mathbb{E}^\varepsilon_{nl}$ is G{\^a}teaux differentiable which can be computed for any $\varphi,\psi$ as
    \begin{align*}
    \begin{aligned}
        \langle D\mathbb{E}_{nl}^\varepsilon(\varphi),\psi \rangle & = \frac{1}{2}\int_\Omega\int_\Omega J_\varepsilon(x-y)(\varphi(x)-\varphi(y))(\psi(x)-\psi(y)) \du x\du y\\
        & = \int_\Omega (a_\varepsilon(x) \varphi(x) - J_\varepsilon\ast\varphi(x))\psi(x) \du x.
    \end{aligned}
    \end{align*}
    The second order G{\^a}teaux derivative as a bilinear form in $H$ can also be computed for any $\varphi,\psi_1,\psi_2$ as
    \begin{align*}
        \langle D^2\mathbb{E}_{nl}^\varepsilon(\varphi),[\psi_1,\psi_2] \rangle = \frac{1}{2}\int_\Omega\int_\Omega J_\varepsilon(x-y)(\psi_1(x)-\psi_1(y))(\psi_2(x)-\psi_2(y)) \du x\du y.
    \end{align*}
    The nonegativity of $J_\varepsilon$ thus imply that $\langle D^2\mathbb{E}_{nl}^\varepsilon(\varphi),[\psi,\psi] \rangle \ge 0$ for any $\varphi,\psi\in H$. Letting $\psi\in L^2(I;V)$ such that $\widehat{\psi} = m_\Omega$, we take the Taylor's expansion of $\mathbb{E}_{nl}^\varepsilon$ about $\varphi_\varepsilon$ so that
    \begin{align}
        \begin{aligned}
            & \int_0^T\mathbb{E}_{nl}^\varepsilon(\psi(t)) \du t  = \int_0^T \mathbb{E}_{nl}^\varepsilon(\varphi_\varepsilon(t)) + \langle D\mathbb{E}_{nl}^\varepsilon(\varphi_\varepsilon(t)),(\psi(t) - \varphi_\varepsilon(t)) \rangle \\ &\ \ 
            + \frac{1}{2}\langle D^2\mathbb{E}_{nl}^\varepsilon(\widetilde{\varphi}),(\psi(t) - \varphi_\varepsilon(t))^2 \rangle \du t\\
            & \ge \int_0^T\mathbb{E}_{nl}^\varepsilon(\varphi_\varepsilon(t)) +  (a_\varepsilon(x) \varphi_\varepsilon(t) - J_\varepsilon\ast\varphi_\varepsilon(t),\psi(t) - \varphi_\varepsilon(t))_\Omega \du t.
        \end{aligned}
    \end{align}
    Taking advantage of the convergences \eqref{conv:nontoloc} and \eqref{gamma:energy} in Lemma \ref{lemma:davoli}, Fatou's lemma, and the convergence $\varphi_\varepsilon\to \widetilde\varphi$ in $C(\overline{I};H)$ we infer that
    \begin{align}
        \begin{aligned}
              \int_0^T\mathbb{E}_{l}(\widetilde{\varphi}(t)) +  ({\bg}(t),\psi(t) - \widetilde{\varphi}(t))_\Omega \du t \le \int_0^T\mathbb{E}_{l}(\psi(t)) \du t
        \end{aligned}
    \end{align}
    This implies that ${\bg}\in\partial\mathbb{E}_{l}(\widetilde{\varphi})$, from which we further infer --- by additionally considering the differentiability and convexity of $\mathbb{E}_l$ --- that $\langle {\bg},\psi \rangle = \langle D\mathbb{E}_l(\widetilde{\varphi}), \psi \rangle $ for any $\psi\in L^2(I;V)$, i.e.
    \begin{align}
        \int_0^T \int_\Omega {\bg} \psi \du x\du t = \int_0^T \int_\Omega \nabla\widetilde{\varphi}\cdot\nabla\psi \du x\du t,
    \end{align}
    and we further conclude that ${\bg} = -\Delta \widetilde{\varphi}$ in $L^2(I;H)$ and $\frac{\partial\widetilde{\varphi}}{\partial {\bn}} = 0$ almost everywhere in $\Gamma$, for more details see \cite[pages 142--143]{Davoli2021b} for example. 

    Summarizing the computations above, we were able to show that $\widetilde{\mu} = -\Delta \widetilde{\varphi} + F'(\widetilde{\varphi}) + \ell_c\widetilde{\theta}$ in $L^2(I;H)$ and that $\widetilde\varphi\in L^2(I;V_2)$.

    Finally, passing to the limits would show that the equations \eqref{weak:locphi}--\eqref{weak:loctheta} hold. While, Lemma \ref{lemma:davoli}, the weak lower semicontinuity of norms, the assumption on the energy convergence of the initial data, and the convergences we have in hand show that \eqref{energyineql} holds.
    
\end{proof}

\section*{Acknowledgement}
\v S. N. and J.S.H.S have been supported by  Præmium Academiæ of \v S. Ne\v casov\' a. Moreover, \v S. N. has been supported by by the Czech Science Foundation (GA\v CR) through project GA22-01591S. The Institute of Mathematics CAS is supported by RVO:67985840.


\end{document}